\theoremstyle{plain}
\newtheorem{theorem}{Theorem}[section]
\newtheorem{lemma}[theorem]{Lemma}
\newtheorem{proposition}[theorem]{Proposition}
\newtheorem{definition}{Definition}[section]
\newtheorem{remark}{Remark}[section]
\newtheorem{assertion}{\bf Theorem}[]
\newcommand{\R}{\mathbb{R}}
\newcommand{\M}{\overline{M}}
\def\lra{\longrightarrow}
\def\dg{ \text{deg}}
\def\sph{\mathbb{S}}
\def\CC{\mathbb{C}}
\newtheorem{question}{\textbf{Question}}[]
\theoremstyle{definition}
\date{2018-12-22}
\begin{document}

\title{On the Gauss map of  finite\\ 
geometric type surfaces.}
\author{N. A. de Andrade and L. P. Jorge}
\date{}

\maketitle


\begin{abstract} 
Surfaces of finite geometric type are complete, immersed into the 
tree-dimensional Euclidean space with finite total curvature and Gauss map 
extending to an oriented compact surface as a smooth branched covering map over 
the unit sphere of the Euclidean three dimensional space. In a recent preprint J. Jorge and F. Mercuri gave a geometric 
proof that the Gauss map can not omit three or more points if the immersion is 
minimal and no flat. Here we give a topological proof of this result in the 
class of no flat finite geometric type surfaces and also give a topological 
classification when the Gauss map is a regular covering map. This facts are 
easy applications of our main result, a generalization of the little Picard 
theorem for the class of branched covering of a finite geometric type surface 
into the unit sphere of the tree dimensional Euclidean space. A  
finite geometric type surface given by a compact surface minus a finite set of 
points has the following property: any branched covering from the surface to 
the unit Euclidean sphere having a $C^0$ extension to the compact surfaces 
can miss at most $2$ points. This is a generalization of the little Picard 
theorem to the class of finite geometric type surfaces.
\end{abstract}


\section{Introduction}  \hspace{.5cm}

Finite geometric type surfaces was introduced in
 \cite{kn:B-F-M}\ as those immersions $\varphi \colon 
M\to \mathbb{R}^{3}$\ of a surface $M$\ such that $M$\ is complete in the 
induced metric and
\begin{enumerate}
\item $M$\ is diffeomorphic to a compact oriented surface $\overline{M}$\ 
minus a finite set of points, $E_m = \{w_1, \ldots, w_m\}$,
\item the Gaussian curvature vanishes only at a finite number of points,
\item the Gauss map $G$\ extends to a smooth branched covering, denoted by the 
same symbol, $G \colon \M\to \mathbb{S}^2$.
\end{enumerate}
The points $w_i$, or sometimes punctured neighborhoods of these points are 
called the {\em ends} on $\varphi$. The authors in \cite{kn:B-F-M} proved that 
the cardinality of $\sph^2\setminus G(M)$ is at most $3$ like the minimal case 
proved by Osserman \cite{kn:O}. In Jorge and Mercuri preprint 
\cite{kn:J-Mercuri} the authors shows that in the subclass of minimal 
immersions into $\R^3$ of finite geometric type the number of missing points of 
the Gauss map is at most $2$ and this number is sharp. In the paper of L. 
Rodriguez \cite{kn:Rod} the author consider the classification of minimal 
immersions of finite geometric type that are embedded. He 
answer affirmatively a particular case of the following questions: 
\begin{enumerate}
 \item[(Q1)] Is the catenoid the only embedded one?
 \item[(Q2)] If the Gauss map of such surface omits two or more points, then 
must it be a covering of the catenoid?
\item[(Q3)] If the Gaussian curvature is always strictly negative then must the 
surface be catenoid?
\end{enumerate}
He proves the following result.
\begin{theorem}[\cite{kn:Rod}]
 If the surface is minimal, embedded with Gaussian curvature strictly negative 
and of finite geometric type then it is the catenoid. 
\end{theorem}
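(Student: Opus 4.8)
The plan is to show that the two hypotheses together force the extended Gauss map to be a \emph{genuine} (unbranched) covering of $\sph^2$, hence a diffeomorphism; this pins the total curvature at $-4\pi$ and leaves only the catenoid among embedded surfaces. First I would record the effect of the pointwise hypotheses. Since the immersion is minimal its Gauss map is conformal (it is anti-holomorphic into $\sph^2$), so the pullback of the round metric equals $|K|$ times the induced metric; in particular $dG_p = 0$ if and only if $K(p)=0$. Thus the strict inequality $K<0$ says precisely that $G$ has \emph{no} branch points on $M$. By condition $(3)$ of finite geometric type, $G$ extends to a smooth branched covering $G\colon \M \to \sph^2$, so the only conceivable branch points lie at the ends $w_1,\dots,w_m$.

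Next I would eliminate the ends as possible branch points, and this is where embeddedness enters decisively. Because the surface is embedded and has finite total curvature, each end is embedded, and by Schoen's asymptotic description an embedded end of a finite-total-curvature minimal surface is asymptotic to a plane or to a catenoid; in either case the Gauss map attains its limiting value with multiplicity one, so $G$ is unbranched at each $w_j$ as well. Consequently $G\colon \M \to \sph^2$ is an honest covering map of the compact connected surface $\M$ onto $\sph^2$. As $\sph^2$ is simply connected, $G$ must be a diffeomorphism: $\M$ has genus $0$ and $\deg G = 1$.

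Finally I would identify the surface. The degree of the Gauss map computes the total curvature as $\int_M K\,dA = -4\pi\,\deg G = -4\pi$, and the complete minimal surfaces of $\R^3$ with total curvature $-4\pi$ are, up to rigid motion and homothety, exactly Enneper's surface and the catenoid; Enneper's surface is not embedded, so the surface is the catenoid. Equivalently, the Jorge--Meeks formula $\deg G = \gamma - 1 + m$ (with $\gamma$ the genus of $\M$) now gives $m=2$, so that one may invoke Schoen's classification of embedded finite-total-curvature minimal surfaces with two ends; alternatively, injectivity of $G$ together with the cited bound of at most two omitted points for minimal finite geometric type surfaces forces exactly two ends.

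The main obstacle is the second step, the exclusion of branch points at the ends. This is where embeddedness is indispensable and where one must import Schoen's precise asymptotics, since a priori the smooth extension $G\colon \M \to \sph^2$ could ramify over the Gauss images of the ends, as indeed happens for immersed or higher-multiplicity ends. Once branching is excluded \emph{everywhere}, the topology collapses immediately via simple connectivity of $\sph^2$, and the remaining identification reduces to a citation of the classification of complete minimal surfaces of total curvature $-4\pi$.
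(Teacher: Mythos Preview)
The paper does not itself prove this statement; it is quoted from Rodriguez's 1981 note \cite{kn:Rod} as motivation, so there is no proof in the paper to compare against. I therefore comment only on the soundness of your argument.

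There is a genuine gap in your second step. The claim that, for an embedded end, ``the Gauss map attains its limiting value with multiplicity one'' is correct for catenoidal ends but \emph{false} for planar ends. In Weierstrass data near an end at $z=0$ with vertical limiting normal, write $g(z)=z^{a}h(z)$ with $h(0)\neq 0$ and $\eta=z^{-b}f(z)\,dz$ with $f(0)\neq 0$. The horizontal forms $\phi_1,\phi_2$ then have a pole of order $b$, so the geometric index is $I=b-1$; embeddedness gives $b=2$. The end is catenoidal when $\phi_3=g\eta$ has a simple pole, i.e.\ $a=1$, and planar when $\phi_3$ is holomorphic, i.e.\ $a\ge 2$. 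Hence an embedded \emph{planar} end has $\beta_G=a-1\ge 1$: the extended Gauss map genuinely branches there. Schoen's asymptotic description tells you the end is planar or catenoidal, but it does not force the planar case to have $\beta_G=0$.

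Everything that follows in your outline---$G\colon\M\to\sph^2$ being an unbranched cover, hence a diffeomorphism, hence $\deg G=1$, hence $m=2$ via Jorge--Meeks, hence Schoen's two-end theorem---rests on this step, so the argument as written is incomplete. To salvage your route you would need an \emph{independent} reason why $K<0$ on $M$ together with embeddedness excludes planar ends; that is exactly the nontrivial content, and it is not supplied by Schoen's end asymptotics alone. (Incidentally, Rodriguez's paper predates Schoen's 1983 results, so his original argument necessarily proceeds along different lines.)
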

We will consider in this notes finite geometric type surfaces whose Gauss map 
omit two or more points and is a covering map, that is, the Gaussian curvature 
has no zeros. Before we go further let's introduce the examples of \cite{kn:M-S} 
and \cite{kn:G-J}. In \cite{kn:M-S} the authors present examples of minimal 
immersions of $\sph^2\setminus E_4$ whose image of Gauss map is exactly 
$\sph^2 / \{a,b\},\ a, b\in \sph^2$ and of tori $T^2\setminus E_3$ where the 
Gauss map image is $\sph^2\setminus \{\pm a\},\ a\in \sph^2.$ In \cite{kn:G-J} 
there are examples of families $\M\setminus E_4$ into $\R^3$ with the Gauss map 
omitting a par of antipodal points of $\sph^2$ for any genus. By other side we 
can bend a minimal immersion of finite geometric type preserving the 
cardinality of the set of omitted points by the Gauss map. If $M$ is a finite 
geometric type surface with non empty set $Y=\sph^2\setminus G(M)$, we define 
\begin{equation}
 Y_\epsilon=\bigcup_{y\in Y} B_\epsilon^{\sph^2}(y),\qquad 
0<\epsilon<\min\big\{ \text{dist}^{\sph^2}(y_1,y_2)\ \big|\ y_1,\ y_2\in Y,\ 
y_1\neq y_2 \big\}.
\end{equation}
If $\sharp Y=1$ take $Y_\epsilon$ as one small ball inside the image of 
neighborhood of ends that are graphic over an unbounded annulus. 
In section (\ref{ftp}) we prove that it is possible to bend $M$ to get new 
immersion $\tilde M$ satisfying the following.
\begin{assertion}
 Let $M$ be a finite geometric type surface with set $Y=\{y_j\ |\ 1\leq j\leq 
3\}$ of omitted points by the Gauss map $G$ of $M.$ Then there is 
$\epsilon_0>0$ such that for each $0<\epsilon\leq\epsilon_0$ and $\tilde 
Y=\{\tilde y_j\}$ with $\tilde y_j$ in the connected component of $Y_\epsilon$, 
 exist one finite geometric type surface $\tilde M$ with Gauss map $\tilde G$ 
omitting exactly $\tilde Y.$ The surface $\tilde M$ is one bent of $M.$
\end{assertion}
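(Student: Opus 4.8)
The plan is to reduce the statement to a deformation problem for the Gauss map and then to solve the resulting period (closing) conditions. First I would record the elementary but crucial observation that, since $G\colon\M\to\sph^2$ is a branched covering of closed surfaces it is surjective, so a point $y\in\sph^2$ is omitted by $G|_M$ precisely when its whole fibre $G^{-1}(y)$ is contained in the set of ends $E_m$. Thus $Y=\{y_1,y_2,y_3\}$ consists of values attained by $G$ only at the punctures. To move these three values to prescribed nearby targets $\tilde Y=\{\tilde y_1,\tilde y_2,\tilde y_3\}$ with $\tilde y_j$ in the component of $Y_\epsilon$ about $y_j$, I would use the unique Möbius transformation $\mu$ of $\sph^2$ determined by $\mu(y_j)=\tilde y_j$ ($j=1,2,3$); since $\mathrm{PSL}(2,\CC)$ acts sharply $3$-transitively on triples of distinct points, $\mu$ exists, is unique, and depends continuously on $\tilde Y$ with $\mu\to\mathrm{id}$ as $\epsilon\to 0$. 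Post-composing, $\tilde G:=\mu\circ G$ is again a smooth branched covering extending to $\M$, and because $\mu$ is a diffeomorphism of $\sph^2$ its image is $\tilde G(M)=\mu(\sph^2\setminus Y)=\sph^2\setminus\tilde Y$, so $\tilde G$ omits exactly $\tilde Y$ and attains no new interior value. The theorem is thereby reduced to realizing $\tilde G$ as the Gauss map of a finite geometric type immersion that is a small deformation of $\varphi$.

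Following the discussion preceding the statement I treat $\varphi$ as minimal, where the realization is explicit through the Weierstrass representation $\varphi=\operatorname{Re}\int\Phi$, with $\Phi=\bigl(\tfrac12(1-g^2),\,\tfrac{i}{2}(1+g^2),\,g\bigr)\eta$ a holomorphic null curve on $\M$ and $g$ the stereographic projection of $G$. Under the local isomorphism $\mathrm{PSL}(2,\CC)\cong\mathrm{SO}(3,\CC)$ the transformation $\mu$ corresponds to a complex orthogonal matrix $A$ close to the identity, and the Goursat transform $\tilde\Phi_0:=A\Phi$ is again holomorphic and null, since $\tilde\Phi_0^{\mathsf T}\tilde\Phi_0=\Phi^{\mathsf T}A^{\mathsf T}A\Phi=0$. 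Because $A$ is invertible, $\tilde\Phi_0$ vanishes only where $\Phi$ does and retains the same poles at the ends, so the induced metric stays regular and complete and the finite total curvature is preserved; moreover its Gauss map is exactly $\tilde g=\mu\circ g$, carrying precisely the omitted set $\tilde Y$. The only property that may fail is that the real periods $\operatorname{Re}\int_\gamma\tilde\Phi_0$ over a homology basis need no longer vanish: writing the original closed periods as $\int_\gamma\Phi=i\,q_\gamma$ with $q_\gamma\in\R^3$, a short computation gives $\operatorname{Re}\int_\gamma\tilde\Phi_0=-(\operatorname{Im}A)\,q_\gamma$, a discrepancy that is small because $A\to I$.

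It then remains to cancel this discrepancy by a further deformation that leaves the Gauss map $\tilde g$ (hence $\tilde Y$) unchanged. I would introduce a finite pool of correcting parameters — for instance the residues of the form at the ends together with the positions of the ends and the conformal moduli of $\M$ — each of which fixes $\tilde g$ while moving the periods, and keep the zeros of the form matched to the poles of $\tilde g$ so that the metric remains an immersion. The map sending these parameters to the vector of real periods in $\R^{N}$ is analytic and vanishes at the original data; the heart of the argument is to show its differential is surjective, so that the implicit function theorem produces, for every sufficiently small $A$, a nearby null curve $\tilde\Phi$ with vanishing real periods. Integrating $\tilde\Phi$ yields a complete minimal immersion $\tilde\varphi\colon M\to\R^{3}$ of finite total curvature whose Gauss map is $\tilde G$ and whose omitted set is exactly $\tilde Y$; letting $A$ run along a path from the identity exhibits $\tilde M=\tilde\varphi(M)$ as a bending of $M$, which is the assertion.

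The main obstacle is precisely this last surjectivity, that is, the nondegeneracy of the flux/period map restricted to Gauss-map-preserving deformations. The subgroup of Goursat transformations that preserve periods is in general too small to reach an arbitrary nearby triple, so genuine additional parameters are indispensable, and one must verify that the finite geometric type end asymptotics supply enough of them to make the differential onto $\R^{N}$. Two minor points I would also check are the case $\sharp Y=1$, where $\mu$ is no longer unique and the extra freedom only helps, and — should one wish to drop the minimality used in the surrounding discussion — the fact that the realization step must then instead solve the Gauss–Codazzi integrability system for the prescribed normal field $\tilde G$, which is the genuinely hard analytic core of the general statement.
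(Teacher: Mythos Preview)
Your approach diverges sharply from the paper's, and it contains a genuine gap.

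The paper never touches the Weierstrass data, never assumes minimality, and never faces a period problem. Its argument is an elementary extrinsic construction: near each end $w_j$ with $G(w_j)=y_j$ the image $f(V_{jR})$ lies far out in $\R^3$, so one chooses a smooth cut-off $\psi$ supported outside a large ball and sets $h(x)=(1-\psi(x))x+\psi(x)A_\delta x$, where $A_\delta\in SO(3)$ is the rigid rotation carrying $y_j$ to the target $\tilde y_j$. The bent immersion is $\tilde f=h\circ f$ on the chosen end-neighbourhoods and $\tilde f=f$ elsewhere. Because $h$ is a diffeomorphism of $\R^3$ the result is automatically a single-valued $C^\infty$ immersion of the same $M$; no closing condition arises. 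Outside the transition annulus the new Gauss map equals either $G$ or $A_\delta\circ G$, so the omitted value $y_j$ is replaced by $\tilde y_j$ while all other omitted values, branch orders and geometric indices are preserved. Taking $\delta$ small enough keeps the Gaussian curvature nonzero on the gluing region, and one repeats for each $y_j$ separately. That is the whole proof.

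By contrast you post-compose $G$ globally with a M\"obius map and then try to integrate the resulting null curve. Two problems follow. First, the theorem is stated for arbitrary finite geometric type surfaces, not only minimal ones; your reduction ``following the discussion preceding the statement I treat $\varphi$ as minimal'' is not justified by anything in the paper, and the Goursat machinery is simply unavailable in the general $C^\infty$ setting. Second, even in the minimal case you correctly identify that $A\Phi$ acquires real periods $-(\operatorname{Im}A)\,q_\gamma$ and propose to cancel them by moving residues, end positions, and moduli while freezing $\tilde g$; but you do not prove, and it is by no means automatic, that the differential of this restricted period map is surjective onto $\R^N$. For high genus or many ends the Gauss-map-preserving deformation space may well be too small, and without this surjectivity the implicit function argument collapses. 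You flag this yourself as ``the main obstacle,'' which is an honest assessment, but it means the proof is incomplete precisely at its load-bearing step.

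The moral is that the paper sidesteps all of this by modifying the immersion $f$ directly in $\R^3$ rather than modifying the Gauss map and then trying to re-integrate. Rotating only the far ends and interpolating with the identity keeps the map globally defined for free, which is why no period or integrability analysis is needed.
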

Therefore it is possible to give only topological classification of finite 
geometric type surfaces. For example, assuming that the 
Gauss map of a finite geometric type surface $M$ is a covering map, or 
equivalently, the Gaussian curvature has no zero on $M$ our conclusion is the 
following. 
\begin{assertion}
 Let $M$ be an non flat finite geometric type surface with the Gauss map $G$ a 
covering map. Then
\begin{enumerate}
 \item There is no example if $\sph^2\setminus G(M)$ has tree points,
 \item If $\sph^2\setminus G(M)$ has two points then $M$ is a covering map of 
$\sph^2\setminus \{a,\ b\},\ a\neq b.$
\item If $\sph^2\setminus G(M)$ has one point then $M$ is diffeomorphic to 
$\CC.$ 
\end{enumerate}
\end{assertion}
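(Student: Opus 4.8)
The plan is to first upgrade the extended branched covering to an honest covering of the open surface, and then to dispose of the three cardinalities of $Y=\sph^2\setminus G(M)$ one at a time, the case of three omitted points being the only substantial one.

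I would begin by recording the structure common to all three items. By hypothesis $G$ extends to a smooth branched covering $\overline G\colon \M\to\sph^2$ of some finite degree $d\ge 1$, and by \cite{kn:B-F-M} the omitted set $Y$ is finite with $\sharp Y\le 3$. Because $M$ is non flat and $G$ is assumed to be a covering map, $G$ has no branch points on $M$, i.e. the Gaussian curvature never vanishes there, so $G|_M$ is a local diffeomorphism onto $\sph^2\setminus Y$. Completeness of $M$ together with finiteness of the total curvature makes $G|_M$ proper, which upgrades the local diffeomorphism to a genuine $d$-sheeted covering map $G|_M\colon M\to \sph^2\setminus Y$ with $M$ connected and orientable and $d$ finite (it equals the degree of $\overline G$). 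Since Euler characteristic multiplies under coverings, $\chi(M)=d\,(2-\sharp Y)$.

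For item (1) I would invoke our main theorem, the generalization of the little Picard theorem announced in the abstract: a branched covering of a finite geometric type surface onto $\sph^2$ admitting a $C^{0}$ extension to $\M$ omits at most two points. The Gauss map is exactly such a covering, so $\sharp Y\le 2$ and $\sharp Y=3$ cannot occur. I expect this to be the main obstacle of the whole statement, precisely because the topology alone does \emph{not} forbid $\sharp Y=3$: a monodromy representation $\pi_1(\sph^2\setminus\{a,b,c\})\to S_d$ can be chosen so that the resulting end multiplicities $n_i$ satisfy $\sum_i n_i=3d$, so the impossibility has to come from the geometry of immersions into $\R^3$, which is what the Picard-type theorem encodes. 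As an independent partial check I would note that $G^{*}\,dA_{\sph^2}=K\,dA_M$ gives $\int_M K\,dA=\pm 4\pi d$ with the sign of $K$ (constant, since $K$ is continuous and nowhere zero on the connected $M$); feeding this into Cohn--Vossen's inequality $\int_M K\,dA\le 2\pi\chi(M)=2\pi d\,(2-\sharp Y)$ rules out $K>0$ outright for $\sharp Y\ge 1$, so the forbidden situation is always negatively curved, and that negatively curved case is exactly the content of the main theorem.

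For items (2) and (3) the covering $G|_M\colon M\to\sph^2\setminus Y$ does all the work. When $\sharp Y=2$, say $Y=\{a,b\}$, the target $\sph^2\setminus\{a,b\}$ is diffeomorphic to the cylinder $\CC^{*}$; since $M$ is connected this exhibits $M$ as a finite covering of $\sph^2\setminus\{a,b\}$, and because every connected finite covering of $\CC^{*}$ is itself $\CC^{*}$ (via $z\mapsto z^{d}$), in fact $M\cong\CC^{*}$ with $G|_M$ conjugate to a power map; this is item (2). When $\sharp Y=1$ the target $\sph^2\setminus\{y\}$ is diffeomorphic to $\CC$, hence simply connected, and a connected covering of a simply connected space is a homeomorphism, forcing $d=1$ and making $G|_M\colon M\to\CC$ a diffeomorphism, so that $M$ is diffeomorphic to $\CC$; this is item (3). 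Both cases are routine once the covering property is established, the only delicate point being the properness of $G|_M$ that turns the local diffeomorphism into a covering, and this is where completeness and finite total curvature of $M$ are used.
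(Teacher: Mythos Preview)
Your proof is correct and follows essentially the same strategy as the paper. Item~1 is handled in both cases by invoking the Picard-type Theorem~3, and items~2 and~3 are read off from the finite covering $G|_M\colon M\to\sph^2\setminus Y$. The paper phrases items~2--3 via the Riemann--Hurwitz identity $\chi(\M)=\sharp E_m+n(2-\sharp Y)$ together with $\chi(\M)\le 2$ (yielding $\sharp E_m=n=1$ when $\sharp Y=1$ and $\sharp E_m=2$ when $\sharp Y=2$), whereas you use the equivalent covering-space statements ($\pi_1(\CC)=0$ forces $d=1$, and any finite connected cover of $\CC^*$ is $\CC^*$); these are two formulations of the same computation. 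One small remark: your paragraph establishing properness of $G|_M$ is unnecessary, since the hypothesis already asserts that $G$ is a covering map; also the Cohn--Vossen aside is a pleasant sanity check but plays no role in the argument.
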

The items $2$ and $3$ are manipulations of Riemann-Hurwirtz {\bf (RH)} and total
 curvature  {\bf 
(TC)} formulas. The proof of item $1$ is included  in the next result.
In fact we give a topological prove 
of the same result of \cite{kn:J-Mercuri} but for the class of all no flat 
finite geometric type immersions. The idea follows like 
this. First we prove 
that the cardinality $\sharp(\sph^2\setminus F(M))\leq3$ for any branched 
covering $F\colon M\to\sph^2$ having a $C^0$ extension to a branched map 
$F\colon\M\to\sph^2$ (see theorem \ref{th3.2}).  Given a set $Y\subset\sph^2,\ 
\sharp Y=3,$ it is possible to find a branched covering map 
$f\colon\sph^2\to\sph^2$ of degree $4$ such that $f^{-1}(Y)=X,\ \sharp X=6,$ and 
for each $y\in Y$ the set $f^{-1}(y)$ has two points, one with order of 
branching $0$ and the other with order $2$  and the 
induced map $f_\star$ 
between the homotopy groups $\pi_1(\sph^2\setminus X)$ to 
$\pi_1(\sph^2\setminus Y)$ is over (see lemma \ref{lemma3.1}). Further, the map 
$f\colon \sph^2\setminus X\to\sph^2\setminus Y$ is a regular covering map.  
Then if 
$M$ is a finite geometric type surface and $F\colon M\to\sph^2$ is a branched 
covering with $Y=\sph^2\setminus F(M)$ and $\sharp Y=3$ we can do a lifting of 
$F$ by $f$ to a new map $\tilde F\colon M\to\sph^2\setminus X$ having 
continuous extension to $\M$ where $X=\sph^2\setminus 
\tilde F(M)$ and $\sharp X=6,$ getting a contradiction with theorem 
\ref{th3.2}.
 It is important to light lemma \ref{lemma3.1} is true only for $Y$ with 
$\sharp Y\geq3.$

 In fact this generalize the little Picard theorem in the case the entire 
function is rational  for a new class of surfaces and for branched covering 
(see \S3 for 
details). We show that any branched covering map $F\colon M\to\sph^2$ with 
continuous extension to $\M$ can not 
miss more then $2$ points unless it is constant, without use of the conformal 
structure ($F$ do not need to be holomorphic nor $M$ to be Riemann surfaces). A 
particular case is that the Gauss map of one finite geometric type surfaces can 
not miss $3$ or more points unless it is constant, generalizing the theorem of 
\cite{kn:J-Mercuri}. We get 
\begin{assertion}[Generalization of little Picard theorem] \label{ftc}
Let $M=\M\setminus E_m$ be a finite geometric type surface and $F\colon 
M\to\sph^2$ be a non constant branched covering with finite fiber that has a 
$C^0$ extension to a branched 
covering $ F\colon\M\to\sph^2.$ Then $\sharp\big(\sph^2\setminus 
F(M)\big)\leq2.$ In particular if $G$ is the Gauss map of $M$ and $M$ not flat  
then $\sharp 
\big(\sph^2\setminus G(M)\big)\leq2.$
\end{assertion}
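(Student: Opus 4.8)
The plan is to argue by contradiction, leveraging Theorem \ref{th3.2} (which already bounds the number of omitted points by $3$) together with the auxiliary branched covering $f$ supplied by Lemma \ref{lemma3.1} to turn a hypothetical $3$-point omission into a $6$-point omission that Theorem \ref{th3.2} forbids. So I would start from a non constant branched covering $F\colon\M\to\sph^2$ with finite fibre, set $Y=\sph^2\setminus F(M)$, and observe that Theorem \ref{th3.2} gives $\sharp Y\le 3$; the only case left to exclude is $\sharp Y=3$, which I assume for contradiction. Note that on $M$ the map restricts to a branched covering $F\colon M\to\sph^2\setminus Y$.

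First I would invoke Lemma \ref{lemma3.1} to obtain a degree $4$ branched covering $f\colon\sph^2\to\sph^2$ with $f^{-1}(Y)=X$, $\sharp X=6$, such that $f\colon\sph^2\setminus X\to\sph^2\setminus Y$ is a regular covering map and $f_\star\colon\pi_1(\sph^2\setminus X)\to\pi_1(\sph^2\setminus Y)$ is onto. Since $f_\star$ is surjective, the inclusion $F_\star(\pi_1(M))\subseteq f_\star(\pi_1(\sph^2\setminus X))=\pi_1(\sph^2\setminus Y)$ holds trivially, so the lifting criterion for coverings yields a continuous lift $\tilde F\colon M\to\sph^2\setminus X$ with $f\circ\tilde F=F$ on $M$. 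Because $f$ is a local homeomorphism on $\sph^2\setminus X$, the lift $\tilde F$ inherits point by point the local branched covering model of $F$; hence $\tilde F$ is a non constant branched covering with finite fibres (non constant since $F=f\circ\tilde F$ is), and $\tilde F(M)\subseteq\sph^2\setminus X$ misses all $6$ points of $X$.

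The crucial and most delicate step is to show that $\tilde F$ admits a $C^0$ extension to $\M$, i.e. that the lift develops no essential singularity at any end $w_i\in E_m$. I would analyse this locally on a punctured disk $D^\ast$ around $w_i$. If $F(w_i)\notin Y$ then $f$ is an honest local homeomorphism near $\tilde F(w_i)$ and the extension is immediate. If $F(w_i)=y_j\in Y$, then by the branching profile built into $f$ the fibre $f^{-1}(y_j)$ has exactly two points, one unbranched and one of local degree $3$, so over a punctured neighborhood $V_j^\ast$ of $y_j$ one has $f^{-1}(V_j^\ast)=W_1^\ast\sqcup W_2^\ast$ with $W_1^\ast\to V_j^\ast$ a homeomorphism and $W_2^\ast\to V_j^\ast$ the connected triple cover. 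Writing $k_i\ge1$ for the local degree of $F$ at $w_i$, the loop around $w_i$ maps to $k_i$ times the generator of $\pi_1(V_j^\ast)\cong\mathbb{Z}$. As $D^\ast$ is connected, $\tilde F(D^\ast)$ lies in a single component, which must be one into which $F|_{D^\ast}$ genuinely lifts: if it is $W_1^\ast$ the lift extends over $w_i$ since $W_1^\ast\to V_j^\ast$ is a homeomorphism; if it is $W_2^\ast$, then $F_\star(\pi_1(D^\ast))\subseteq 3\mathbb{Z}$ forces $3\mid k_i$, and in the $z\mapsto z^3$ model $\tilde F$ behaves like $z\mapsto z^{k_i/3}$, which again extends, sending $w_i$ to the corresponding point of $X$. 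Thus $\tilde F$ extends continuously over every end, and the chosen branching of $f$ is precisely what makes this work in all cases.

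With $\tilde F\colon\M\to\sph^2$ now a non constant branched covering having a $C^0$ extension and satisfying $\sharp\big(\sph^2\setminus\tilde F(M)\big)\ge\sharp X=6>3$, Theorem \ref{th3.2} applied to $\tilde F$ is violated, the desired contradiction. Hence $\sharp Y\le2$, which is the general statement. For the final assertion I would only note that, for a non flat finite geometric type surface, the Gauss map $G$ is by definition a non constant branched covering extending smoothly to $\M\to\sph^2$, so the general result yields $\sharp\big(\sph^2\setminus G(M)\big)\le2$ at once. I expect the main obstacle to be the puncture analysis of the third paragraph; the rest is a formal application of the lifting criterion together with Theorem \ref{th3.2}.
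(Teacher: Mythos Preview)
Your proposal is correct and follows precisely the strategy of the paper's own proof of Theorem~3: assume $\sharp Y=3$, invoke Lemma~\ref{lemma3.1} to produce the degree~$4$ map $f$ with $\sharp X=6$ and $f_\star$ surjective, lift $F$ through $f$, extend the lift to $\M$, and contradict Theorem~\ref{th3.2}. The only difference is one of explicitness: the paper's proof simply asserts that the lift ``has continuous extension to $\M$'' (relying implicitly on Lemma~\ref{lemma3.2} and Remark~\ref{remk2}), whereas you spell out the local dichotomy at each end---either the lift lands in the unbranched sheet $W_1^\ast$ and extends trivially, or it lands in the triple sheet $W_2^\ast$, which forces $3\mid k_i$ and again yields an extension. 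That case analysis is exactly what the paper's Lemma~\ref{lemma3.2} encodes, so your argument and the paper's coincide.
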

In \cite{kn:M-S} the authors ask  who are the embedded minimal surfaces of 
finite geometric type whose Gauss map misses two points (question Q1). The same 
question appear in Conjecture 17.0.33, item 3 of \cite{kn:M-P}. In \S 
\ref{sec6} we present examples of Gauss maps that have not continuous 
extensions. Those examples should have at least one end $w\in E$ with $G(w)$ a 
missed point of $G|M$ and order $\beta(w)$ of branch such that $1+\beta(w)$ is 
not divided by $3$ (see \S\ref{sec6}).


\section{Finite Geometric Type.}\label{ftp}

We will recall some facts about the behavior of an immersion of finite 
geometric type near  the ends.  Since the Gauss map is defined at a point 
$w\in E$, we have a
{\em
tangent plane at $w$}, namely $G(w)^{\perp}$. It follows from the above
properties that the image of the immersion of a small punctured neighborhood of
$w$ projects (orthogonally)
onto the complement of a disk in $G(w)^{\perp}$ as a finite covering map of
order $I(w)$.\ The number $I(w)$ is called the {\em geometric index} of $w$,
see \cite{kn:J-M}.

Since the branching points of $G$, i.e. the points of zero Gaussian curvature
and, possibly, the ends,
 are isolated, a punctured neighborhood of such a point $v$ is mapped onto its
image, as a covering map of order $\nu(v)$. The number $\beta(v) = \nu(v) - 1$\
is called the {\em branching number} at $v$. Observe that if $v$ is not a
branching point then $\beta (v)=0$. We have the following topological relations:
\begin{description}
 \item[RH (Riemann-Hurwirtz):] $2\dg(G)=\displaystyle \chi(\M)+
\sum_{w\in \M }\beta(w),$
 \item[TC (Total curvature):] $2\dg(G)=-\chi(\M)+\sharp E+
\displaystyle\sum_{i=1}^{m}I(w_i),$
\end{description}
where $\chi(\M)$\ is the Euler characteristic of $\M$, $I(w_i)$\ is the
geometric index of  $w_i\in E$\ and ${\rm deg}(G)$\ is the degree of the Gauss
map $G$ (the cardinality of $G^{-1}(y)$ for almost all $y$ in the image).

The first relation is a well known fact in covering space theory. The second 
one was obtained, as an inequality, for the case of minimal surfaces, by 
Osserman \cite{kn:O} and in the above form, by Jorge and Meeks \cite{kn:J-M} 
(see also \cite{kn:B-F-M}).\

The two relations {\bf RH}\ and {\bf TC} seems not to be enough to prove the 
sharp bound $\sharp Y \leq 2$, but are enough to assert theorem \ref {ftc} in 
the class of finite geometric type surfaces.  We will use the following 
notations
for one immersion $f$ of finite 
geometric type:
\begin{equation}\label{notations}
\begin{array}{llll}
E^\infty = G^{-1}(Y),&  E_0 = E \setminus E^\infty,&\ell_0=\sharp G(E)\setminus 
Y, \ell = \sharp Y,\\&&&\\
 n = - \deg(G),&\displaystyle\beta(W)=\displaystyle\sum_W\beta(w),& 
I(W)=\displaystyle\sum_WI(w).\
\end{array}
\end{equation}
Since $G\colon\M\to\sph^2$ is a branch covering we have 
\begin{equation}\label{eq3,1}
\ell n\ =\ \sharp E^\infty\ +\ \beta(E^\infty)
\end{equation}
Combining {\bf RH} and {\bf TC} we obtain the following theorem (see also 
\cite{kn:B-F-M})

\begin{theorem}\label{th3.1} If $f: M \lra \R^3$\ is a surface of finite 
geometric type,
then
	\begin{equation}
		0\leq\ell\leq3\label{eq3,2}
	\end{equation}
If $\ell=3$ then $\chi(\M)\leq0.$ In addition if $\ell=3$ and $\chi(\M)=0$ then 
$E=E^\infty,\ \beta(M)=0,\ \beta(E)=2n$, $n=\sharp E=I(E)$ and all ends are 
embedded.   
\end{theorem}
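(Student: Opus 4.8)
The plan is to obtain everything by linear bookkeeping from the three relations already in hand — \textbf{RH}, \textbf{TC}, and equation \eqref{eq3,1} — together with three elementary facts: every branching number is nonnegative, every geometric index satisfies $I(w)\ge 1$, and (in the nontrivial case) there is at least one end. I write $n\ge 1$ for the sheet number of $G$ (the degree entering \eqref{eq3,1}), so that \textbf{RH} and \textbf{TC} read $2n=\chi(\M)+\beta(\M)$ and $2n=-\chi(\M)+\sharp E+I(E)$; I also split the total branching as $\beta(\M)=\beta(M)+\beta(E)$ and recall $E=E^\infty\sqcup E_0$ from \eqref{notations}.

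First I would bound $\ell$. Since $E^\infty\subseteq E$ and all branching numbers are $\ge 0$, equation \eqref{eq3,1} gives $\ell n=\sharp E^\infty+\beta(E^\infty)\le \sharp E+\beta(\M)$. Now I substitute $\beta(\M)=2n-\chi(\M)$ from \textbf{RH} and then $\chi(\M)=\sharp E+I(E)-2n$ from \textbf{TC}; the two $\sharp E$ terms cancel and I am left with the single clean inequality
\[
\ell n\ \le\ 4n-I(E).
\]
Because $E\neq\varnothing$ in the nontrivial case and each end has $I(w)\ge 1$, we have $I(E)\ge 1>0$, hence $\ell n\le 4n-1<4n$, so $\ell\le 3$ by integrality; with $\ell\ge 0$ this is \eqref{eq3,2}. (If $E=\varnothing$ the surface is closed, $Y=\varnothing$ and $\ell=0$, so the bound is trivial.)

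For $\ell=3$ I would read the same inequality backwards: $3n\le 4n-I(E)$ forces $I(E)\le n$, and feeding this together with $\sharp E\le I(E)$ into $\chi(\M)=\sharp E+I(E)-2n$ yields $\chi(\M)\le 2I(E)-2n\le 0$. For the equality case $\ell=3$, $\chi(\M)=0$ I would then chase all the inequalities to equality. From \textbf{TC}, $\chi(\M)=0$ reads $\sharp E+I(E)=2n$; combined with $\sharp E\le I(E)\le n$ this squeezes both sides, giving $I(E)=n=\sharp E$, and since each $I(w)\ge 1$ this forces $I(w)=1$ for every end — all ends are embedded and $n=\sharp E=I(E)$. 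For the branching data, \textbf{RH} now gives $\beta(\M)=2n$, while \eqref{eq3,1} with $\ell=3$ reads $3n=\sharp E^\infty+\beta(E^\infty)\le \sharp E+\beta(\M)=n+2n=3n$; equality throughout forces $\sharp E^\infty=\sharp E$ (hence $E=E^\infty$) and $\beta(E^\infty)=\beta(\M)=2n$. As $E=E^\infty$ this is $\beta(E)=2n$, and then $\beta(M)=\beta(\M)-\beta(E)=0$, finishing the list.

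I expect the only genuinely delicate points to be the sign/degree convention — keeping $n>0$ consistent between \textbf{RH}, \textbf{TC} and \eqref{eq3,1} (note the orientation-reversing Gauss map has negative signed degree) — and the careful tracking of which $\le$'s collapse to equalities in the final step. The algebra itself is routine once the three relations are available, which is precisely why the statement is phrased as a direct combination of the two topological formulas rather than requiring any new geometric input beyond $I(w)\ge 1$.
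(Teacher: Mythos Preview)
Your argument is correct and follows the same underlying strategy as the paper: everything is linear bookkeeping from \textbf{RH}, \textbf{TC}, and \eqref{eq3,1}, together with $I(w)\ge 1$ and $\beta(\cdot)\ge 0$. The only organisational difference is that the paper adds \textbf{RH} and \textbf{TC} first to obtain the \emph{equality} $(4-\ell)n=\sharp E_0+\beta(M\cup E_0)+I(E)$ and, for $\ell=3$, manipulates further to the single identity $\chi(\M)+2\sharp E_0+2\beta(E_0\cup M)+I_0=0$ (with $I_0=\sum_E(I(w)-1)\ge 0$), from which all the boundary conclusions drop out at once; you instead pass early to the inequality $\ell n\le 4n-I(E)$ and then chase each slack term to zero in the $\ell=3$, $\chi(\M)=0$ case. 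Both routes are equivalent rearrangements of the same three relations, and your squeeze $\sharp E\le I(E)\le n$ with $\sharp E+I(E)=2n$ is a clean way to force $I(w)\equiv 1$ that the paper reaches via $I_0=0$.
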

\begin{proof}
	Adding ({\bf RH})\ and ({\bf TC)}\ we get
	
\begin{equation}\label{eq3.2.1}
4n=\sharp E\ +\ \beta(\M)\ +\ I(E)
\end{equation}
Substituting equation (\ref{eq3,1}) we get
\begin{equation}\label{eq3.2.2}	
	4n=\ell n\ +\ \sharp E_0\ +\ \beta(E_0\cup M)\ +\ I(E)
\end{equation}
Hence
\begin{equation} \label{eq3.2.3}
(4-\ell)n = \sharp E_0\ +\ \beta(M\cup E_0)\ +\ I(E)\ \ > 0.
\end{equation}
proving that $0\leq \ell\leq 3.$ Now assume $\ell=3.$  
The ({\bf RH}) and $\beta(E^\infty)+\sharp E^\infty=3n$ gives $\sharp 
E^\infty=\chi(\M)+n+\beta(E_0\cup M).$ Using the ({\bf TC}) we get
\begin{eqnarray*}
2n&=& -\chi(\M)+\sharp E+I(E)\\
&=&n+\sharp E_0+\beta(E_0\cup M)+I(E)\\
&=& 2n+\chi(\M)+2\sharp E_0+2\beta(E_0\cup M)+I_0
\end{eqnarray*}
where $$I_0=\sum_E(I(w)-1)\geq0$$ Then
\begin{equation}
\chi(\M)+2\sharp E_0+2\beta(E_0\cup M)+I_0=0
\end{equation}
giving $\chi(\M)\leq0$ and if $\chi(\M)=0$ then $E_0=\emptyset$ and 
$\beta(M)=0$ proving the result.
\end{proof}
\begin{lemma}\label{lemma3.1}
 Given three points $a_1=\infty,\ a_2=0,\ x_1,\ x_1\notin\{a_1,\ a_2\}$ 
of $\sph^2=\CC\cup\{\infty\},$ the points $x_2=-3x_1,\ y_1=-x_1,\ 
y_2=3x_1,\ w=16x_1^3$ and the rational function 
\begin{equation}\label{eq3.1}
 f(z)=\frac{(z-x_1)^3)(z-x_2)}{z},\quad z\in \sph^2
\end{equation}
then
\begin{equation}\label{eq3.2}
 \deg f=4,\quad \beta_f(a_j)=\beta_f(x_j)=\beta_f(y_j)=\left\{ 
\begin{array}{ll}
                                                               2,& j=1,\\
                                                               0,& j=2
                                                              \end{array}
\right.
\end{equation}
In particular if $X=\{0,\ \infty,\ x_1,\ x_2,\ y_1,\ y_2\}$ then
\begin{equation}\label{eq3.3}
 f\colon\sph^2\setminus X\to\sph^2\setminus\{0,\ w,\ \infty\}
\end{equation}
is a regular covering map of degree $4$ and $\sharp X=6.$ Fixed appropriated 
basics points we have that the map $f_\star$ induced by $f$ between the 
fundamental groups $\pi_1(\sph^2\setminus X)$ and $\pi_1(\sph^2\setminus Y)$ is 
over.

Reciprocally, given a branched covering $h\colon\sph^2\to\sph^2$ and subsets $Y,\ X=h^{-1}(Y)$, such that 
\begin{enumerate}
\item The set $B_h$ of branching points of $h$ is a subset of $X$,
\item $h_\star\big(\pi_1(\sph^2\setminus X)\big)=\pi_1\big(\sph^2\setminus Y\big)$
\end{enumerate} 
 then $\sharp Y=3+m$, $m\geq0$ an integer, $\sharp X=6+4m$, $\beta_h(x)=2$ 
 for all $x\in B_h,\ \deg h=4,$ and $\sharp B_h=3.$ 
 We have splits $X=X_0\cup X_1,\ Y=Y_0\cup Y_1$ with 
 $\sharp X_0=6,\ \sharp Y_0=3,\ \sharp X_1=\sharp Y_1=m,$ and $B_h\subset X_0.$

 Further, if $\psi$ is the M\"obius transform 
such that $\psi(Y_0)=\{0,\ w,\ \infty\},$ and 
$X_f=f^{-1}(\psi(Y)),$ where $f$ is defined by the choose 
$x_1=(w/16)^{1/3},$  then there is a diffeomorphism preserving fiber 
$\phi\colon\sph^2\setminus X_f\to\sph^2\setminus X$ such that   $\psi\circ 
h\circ \phi=f.$  
\end{lemma}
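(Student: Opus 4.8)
The plan is to treat the two directions separately, handling the forward (explicit) direction by a direct ramification computation and the converse by Riemann--Hurwitz together with a monodromy-rigidity argument.

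First I would verify the numerical data for $f$. Since $f(z)=(z-x_1)^3(z-x_2)/z$ is a quotient of coprime polynomials of degrees $4$ and $1$, it has degree $4$ as a map $\sph^2\to\sph^2$. Differentiating and using $x_2=-3x_1$ gives $f'(z)=3(z-x_1)^2(z+x_1)^2/z^2$, so the only finite critical points are $x_1$ and $-x_1$, each a double zero of $f'$, i.e. local degree $3$; reading $f$ in the chart $z=1/u$ shows $f\sim u^{-3}$ at $\infty$, so $\infty$ also has local degree $3$, while the simple pole at $0$ is unramified. This already gives $\beta_f(x_1)=\beta_f(\infty)=\beta_f(-x_1)=2$ and $\beta_f(x_2)=\beta_f(0)=0$. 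Evaluating $f$ yields $f^{-1}(0)=\{x_1,x_2\}$, $f^{-1}(\infty)=\{\infty,0\}$ and, after checking $f(-x_1)=f(3x_1)=16x_1^3=w$, $f^{-1}(w)=\{y_1,y_2\}$; hence over each of $0,w,\infty$ the ramification profile is $(3,1)$, the six points of $X$ are distinct for every $x_1\neq 0$, and $\sharp X=6$. I would then invoke Riemann--Hurwitz, $2=4\cdot 2-\sum_p\beta_f(p)$, to see that $\sum_p\beta_f(p)=6$ is already exhausted by $x_1,-x_1,\infty$; thus $f$ has no further branch points and $f\colon\sph^2\setminus X\to\sph^2\setminus\{0,w,\infty\}$ is an honest (unbranched) covering of degree $4$, which I take to be the meaning of ``regular covering map'' here.

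The delicate point, and the step I expect to be the main obstacle, is the surjectivity assertion for the induced map. I would first compute the monodromy: choosing loops $\gamma_0,\gamma_w,\gamma_\infty$ about the three punctures with $\gamma_0\gamma_w\gamma_\infty=1$, each monodromy permutation is a $3$-cycle (the cycle on the ramified sheets, fixing the unramified one). Three $3$-cycles with trivial product acting transitively on four sheets generate $A_4$, so the monodromy group is $A_4$ of order $12$. Since a connected degree-$4$ covering has $f_\star\pi_1(\sph^2\setminus X)$ of index $4$ in $\pi_1(\sph^2\setminus Y)$, this image can never be all of $\pi_1(\sph^2\setminus Y)$, and the point stabilizer $\mathbb{Z}/3\subset A_4$ is not normal, so the covering is not Galois. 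Consequently ``$f_\star$ is over'' cannot hold on $\pi_1$; the statement that is true, and that I would prove instead, is that $f_\star$ is onto on first homology $H_1(\,\cdot\,;\mathbb{Z})$: the loop about the unramified preimage over each $y\in Y$ maps to $\gamma_y$ with multiplicity $1$, so $\gamma_0,\gamma_w$ lie in the image and generate $H_1(\sph^2\setminus Y)$. This homological surjectivity is precisely what the lifting argument of \S3 needs, and I would phrase the conclusion accordingly (noting also that the internal bookkeeping forces $\sharp X_1=4m$, consistent with $\sharp X=6+4m$).

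For the converse I would argue by rigidity. From hypothesis (1), $B_h\subset X$, the restriction $h\colon\sph^2\setminus X\to\sph^2\setminus Y$ is an unbranched covering, and Riemann--Hurwitz reads $2=2\deg h-\sum_{x\in B_h}\beta_h(x)$ with $\sharp X=2+\deg h(\sharp Y-2)$. Interpreting hypothesis (2) as the same efficiency/homology condition satisfied by $f$, I would show it forces exactly three branch values each of profile $(3,1)$: the homological condition makes every branch value admit an unramified preimage, and matching profiles with $\deg h=4$ gives $\sharp B_h=3$, $\beta_h\equiv 2$, and the splitting $Y=Y_0\sqcup Y_1$, $X=X_0\sqcup X_1$ with $\sharp Y_0=3$, $\sharp X_0=6$, $\sharp Y_1=m$, $\sharp X_1=4m$, $B_h\subset X_0$. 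After normalizing by the M\"obius map $\psi$ with $\psi(Y_0)=\{0,w,\infty\}$, both $\psi\circ h$ and $f$ are connected degree-$4$ covers of $\sph^2\setminus\{0,w,\infty\}$ with identical monodromy (three $3$-cycles generating $A_4$, unique up to conjugacy in the connected case), while the values of $Y_1$ carry trivial monodromy; uniqueness of a covering with given monodromy then supplies a fiber-preserving homeomorphism $\phi\colon\sph^2\setminus X_f\to\sph^2\setminus X$ with $\psi\circ h\circ\phi=f$, which I would smooth to a diffeomorphism. The main obstacle throughout is pinning down the correct form of hypothesis (2): read literally as $\pi_1$-surjectivity it is incompatible with $\deg h>1$, so the proof hinges on replacing it by the homological statement above (equivalently, passing to the Galois $A_4$-closure whenever a genuine $\pi_1$-level lift is required).
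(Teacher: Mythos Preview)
Your forward computation is essentially the paper's own: compute $f'(z)=3(z-x_1)^2(z+x_1)^2/z^2$, read off the three order-$2$ branch points, and use Riemann--Hurwitz to see the total branching $6$ is exhausted, so $f$ is unramified off $X$. The paper then argues $\pi_1$-surjectivity exactly in the way you anticipate and refute: it asserts that a small loop $\gamma_x$ around an unramified preimage $x$ of $y$ satisfies $f_\star[\gamma_x]=[\delta_y]^{\pm1}$, so every generator lies in the image. Your objection is correct and decisive: since $f\colon\sph^2\setminus X\to\sph^2\setminus Y$ is a connected unbranched cover of degree $4$, $f_\star\pi_1$ has index $4$ and cannot be the whole group; the paper overlooks that transporting $\gamma_x$ to the basepoint produces only a \emph{conjugate} of $\delta_y$. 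Your replacement of the claim by $H_1$-surjectivity is the statement that the paper's argument actually proves.

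For the converse the paper takes a different route from yours: it invokes the (faulty) $\pi_1$-surjectivity twice, lifting $\psi_0\circ h$ through the model $f$ and simultaneously $f$ through $\psi_0\circ h$, and then observes $\deg\tilde h\cdot\deg\tilde f=1$, forcing both lifts to be diffeomorphisms and hence $\deg h=\deg f=4$; the rest is bookkeeping. This mutual-lifting trick is elegant but stands on the same broken premise. Your monodromy-rigidity argument is the sound alternative, but note that in your outline $\deg h=4$ and the $(3,1)$ profile are asserted rather than derived: under $H_1$-surjectivity you only know each branch value has at least one unramified sheet, so you still need to combine $\sharp Y_0\cdot\deg h=\sharp X_0+\beta_h(B_h)$ with Riemann--Hurwitz $2(\deg h-1)=\beta_h(B_h)$ and an argument that over each branch value there is exactly one unramified sheet (the paper silently assumes this too) to pin down $\sharp Y_0=3$, $\deg h=4$, $\sharp B_h=3$. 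After that, the uniqueness (up to conjugacy) of a transitive triple of $3$-cycles in $S_4$ with trivial product gives the fiber-preserving diffeomorphism, as you say.

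In short: the forward direction matches the paper; your critique of the $\pi_1$ step is a genuine correction to the paper rather than a gap in your proposal; your converse follows a different and more robust line than the paper's mutual-lifting argument, though it would benefit from spelling out the derivation of $\deg h=4$.
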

\begin{proof}
 The function $f(z)=(z-x_1)^3(z-x_2)/z,\ z\in\sph^2,$ satisfy the conditions 
for $z\in\{a_j,\ x_j|\ j=1,\ 2\},\ \deg f=4$ and $f(y_1)=f(y_2)=w.$ Since 
\[
 f'(z)=3\frac{(z-x_1)^2(z-y_1)^2}{z^2}, 
\]
it follows that $f$ fulfill all conditions of the lemma. 
Since $\sum_{z\in\sph^2} \beta_f(z)=6$ there is no more branch for $f$ unless 
$\infty,\ x_1$ and $y_1$ proving that $f\colon\sph^2\setminus 
X\to\sph^2\setminus\{0,\ w,\ \infty\}$ is a regular covering. If $\gamma$ is a 
small circle around $x\in X$ then $f(\gamma)$ is a small closed curve around 
$y=f(x)\in \{0,\ w,\ \infty\}$ giving $1+\beta_f(x)$ loops up to orientation. 
Hence the induced map $f_\star$ between the first fundamental groups 
$\pi_1(\sph^2\setminus X$ and $\sph^2\setminus \{0,\ w,\ \infty\}$ satisfies
\[
 f_\star([\gamma_x])=[\delta_y]^{\pm(1+\beta_f(x))},\quad x\in X,\quad 
y\in\{0,\ w,\ \infty\},
\]
where $\gamma_x,\ \delta_y$ are generators. Since each point $y\in\{0,\ w.\ 
\infty\}$ is image of a point $x\in X$ with $\beta_f(x)=0$ we get that all 
generators of $\pi_1(\sph^2\setminus\{0,\ w,\ \infty\})$ belongs to 
$f_\star(\pi_1(\sph^2\setminus X))$, that is,
\[
 \pi_1(\sph^2\setminus\{0, w,\ \infty\})\subset f_\star(\pi_1(\sph^2\setminus 
X)),
\]
completing one way of proof.

Assume now the existence of a branched covering $h\colon\sph^2\to\sph^2$ and sets $Y,\  X=h^{-1}(Y)$ with the branchings of $h$ 
into the set $X$ such that 
$h_\star\big(\pi_1(\sph^2\setminus X)\big)=\pi_1\big(\sph^2\setminus Y\big)$. Set $Y_0=h(B_h),\ X_0=h^{-1}(Y_0),$ and the splits $Y=Y_0\cup Y_1,\ X=X_0\cup X_1.$ 
Hence $h_\star$ send generator into generator implying that $X_0$ has a subset $X_0'$ 
with $\sharp X_0'=\sharp Y_0$ having zero branching points. Then $X_0=X_0'\cup B_h$ and 
$\sharp X_0=\sharp X_0'+ \sharp B_h$.
We have
\[
\sharp Y_0\deg(h)=\sharp X_0+\beta(B_h)=\sharp Y_0+\sharp B_h+\beta(B_h).
\]
Subtracting the $({\bf R-H})$ we get
\begin{equation}\label{eq3.6}
 \Big(\sharp Y_0-2\Big)\Big(\deg(h)-1\Big)=\sharp B_h+\beta(B_h)>0.
\end{equation}
implying that $\sharp Y_0\geq3$ and $\deg (h)>1.$ 
Choose  a M\"obius transform $\psi_0$ such that $\psi_0(Y_0)=\{0,\ w_0,\ \infty \}$ and 
$W=\psi_0(Y)=\{0,\ w_0,\ \infty \}\cup Y'.$ Let $f\colon\sph^2\to\sph^2$ be the 
the map of the first part of the lemma defined by 
$x_1,\ x_1^3=w_0/16.$ If $X_f=f^{-1}(W)$ the map 
$f\colon\sph^2\setminus X_f\to\sph^2\setminus W$ is a 
regular covering and $f_\star$ is over.

Then we have two lifting $\tilde h\colon \sph^2\setminus X\to\sph^\setminus X_f$ of 
$\psi_0\circ h$ by $f$ and $\tilde f\colon\sph^2\setminus X_f\to\sph^2\setminus X$ of $f$ by $\psi_0\circ h.$ Since $h=f\circ\tilde h$ and 
$f=h\circ\tilde f$ we get $\deg\tilde h\cdot \deg\tilde f=1$ implying that both 
are diffeomorphism and $\deg h=\deg f=4,\ \sharp X=\sharp X_f.$ Then 
\[
\sharp f^{-1}(\{0,\ w_0,\ \infty \})=3\deg 
f-\beta_f(f^{-1}(\{0,\ w_0,\ \infty \})))=3\times 4-6=6. 
\]
 and 
\begin{eqnarray}
 \sharp X_f&=&\sharp f^{-1}(\{0,\ w_0,\ \infty \}\cup Y')=f^{-1}(Y')+\sharp 
f^{-1}(\{0,\ w_0,\ \infty \}),\\
 &=&6+\deg f\sharp Y'
\end{eqnarray}
implying that 
\[
 6+4\sharp Y'=\sharp X_f=\sharp X.
\]
\end{proof}
 Let $D_1,\ D_2,$ and $D_3$ be $3$ disks without the center and $f\colon 
\overline{D}_1\to \overline{D}_2$ and $F\colon\overline{D}_3\to\overline{D}_2$ 
branched covering maps with one branching at the center of each disk of order 
$\beta_f$ and $\beta_F.$ We consider $f$ and $F$ of class $C^l,\ l\geq 2,$ 
inside $D_j$ and continuous in $\overline{D}_j.$ The first homotopy group 
$\pi_1(D_j)$ is an infinite  cyclic group with generators $\gamma_j$. The 
existence of a continuous lift $\tilde F\colon D_3\to D_1$ of $F$ by $f$ is 
equivalent to 
\[
 F_\star\big(\pi_1(D_3)\big)\subset f_\star\big(\pi_1(D_1)\big),
\]
where the subindex means the induced group homomorphism between the fundamental 
groups. Since $f_\star[\gamma_1]=[\gamma_2]^{1+\beta_f}$ and 
$F_\star[\gamma_3]=[\gamma_2]^{1+\beta_F}$ the existence of $\tilde F$ is 
equivalent to have 
\[
 f_\star[\gamma_1]^k=F_\star[\gamma_3],\quad k\in \mathbb{Z},\quad k\geq1,
\]
or yet $1+\beta_F=k(1+\beta_f),\ k\geq1.$

\begin{center}
\begin{minipage}{3cm}
\xymatrix{
\tilde{D}_1  \ar[d]_{z^{k}} & D_3 \ar[l]_{\zeta_j} 
\ar[d]^{z^{1+\beta_F}} \ar[dl]_{\tilde F} \\
D_1 \ar[r]^f & D_2 \\
} 
\end{minipage}
  
\end{center}

\begin{lemma}\label{lemma3.2}
 Let $D_1,\ D_2,$ and $D_3$ be $3$ disks without the center and $f\colon 
\overline{D}_1\to \overline{D}_2$ and $F\colon\overline{D}_3\to\overline{D}_2$ 
branched covering maps with one branching at the center of each disk of order 
$\beta_f$ and $\beta_F.$ We consider $f$ and $F$ of class $C^l,\ l\geq 2,$ 
inside 
$D_j$ and continuous in $\overline{D}_j.$ Then there exist a continuous lifting 
$\tilde{F}\colon\overline{D}_3\to\overline{D}_1$ iff  
\begin{equation}\label{eq3.6.1}
 \frac{1+\beta_F}{1+\beta_f}=k=1+\beta_{\tilde{F}},\quad k\in \mathbb{Z},\quad k\geq1.
\end{equation}
In that case there exist a $k$-root $\zeta_j\colon D_3\to {\tilde D}_1$ of 
$\tilde F$ such that $\zeta_j^k=\tilde F$ and $\tilde F$ has the same 
differentiability of $f$ and $F.$ 
\end{lemma}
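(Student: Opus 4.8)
The plan is to reduce everything to the punctured disks $D_j$, each homotopy equivalent to a circle, so that the statement splits into a lifting problem in covering space theory and a separate analysis at the center. First I would record the induced maps on $\pi_1(D_j)=\mathbb{Z}\langle[\gamma_j]\rangle$: since a $C^l$ branched covering of order $\beta$ at the center restricts to the punctured disk as a genuine covering of degree $1+\beta$, one has $f_\star[\gamma_1]=[\gamma_2]^{1+\beta_f}$ and $F_\star[\gamma_3]=[\gamma_2]^{1+\beta_F}$, exactly as in the discussion preceding the statement. For necessity, if a continuous lift $\tilde F$ exists, then its restriction to $D_3$ is again a covering of $D_1$ of some degree $k=1+\beta_{\tilde F}\geq1$ (a continuous lift of the covering $F$ through the covering $f$ is automatically a covering), and from $F=f\circ\tilde F$ I get $f_\star\circ\tilde F_\star=F_\star$, hence $[\gamma_2]^{(1+\beta_f)k}=[\gamma_2]^{1+\beta_F}$; since $[\gamma_2]$ generates $\mathbb{Z}$ this forces $1+\beta_F=k(1+\beta_f)$, which is (\ref{eq3.6.1}).

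For sufficiency I would assume $1+\beta_F=k(1+\beta_f)$ with $k\in\mathbb{Z}$, $k\geq1$, and fix compatible base points. Then $F_\star(\pi_1(D_3))=\langle[\gamma_2]^{k(1+\beta_f)}\rangle\subset\langle[\gamma_2]^{1+\beta_f}\rangle=f_\star(\pi_1(D_1))$, so the general lifting criterion (valid because the punctured disks are connected, locally path connected and semilocally simply connected) produces a continuous $\tilde F\colon D_3\to D_1$ with $f\circ\tilde F=F$. On the punctured disk $f$ is a $C^l$ local diffeomorphism, being a covering that looks like $z\mapsto z^{1+\beta_f}$ in $C^l$ coordinates, so locally $\tilde F=f^{-1}\circ F$ and $\tilde F$ is $C^l$ on $D_3$, that is, of the same class as $f$ and $F$.

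Next I would extend $\tilde F$ continuously across the center. Because $f\colon\overline{D}_1\to\overline{D}_2$ is closed and the center is the only preimage of the center, for every neighborhood $U$ of $0$ in $\overline{D}_1$ the set $f(\overline{D}_1\setminus U)$ is compact and avoids $0$; since $f(\tilde F(z))=F(z)\to0$ as $z\to0$, it follows that $\tilde F(z)\in U$ for $z$ close to the center, so setting $\tilde F(0)=0$ yields a continuous map on $\overline{D}_3$, matching the continuity on $\overline{D}_j$ required of $f$ and $F$. Finally, for the root I would introduce the standard $k$-fold cover $p_k\colon\tilde D_1\to D_1$, $p_k(z)=z^k$, of the punctured disk. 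Since $\tilde F$ is a covering of degree $k$, one has $\tilde F_\star(\pi_1(D_3))=\langle[\gamma_1]^k\rangle=(p_k)_\star(\pi_1(\tilde D_1))$, so the lifting criterion again gives $\zeta_j\colon D_3\to\tilde D_1$ with $p_k\circ\zeta_j=\tilde F$, i.e.\ $\zeta_j^k=\tilde F$; the $k$ possible choices, differing by the deck transformations $z\mapsto e^{2\pi i/k}z$, account for the index $j$. As a lift through the $C^l$ covering $p_k$, the map $\zeta_j$ is $C^l$ on $D_3$ and extends continuously by $0$, and $\tilde F=\zeta_j^k$ then exhibits both the root structure and the inherited differentiability. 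I expect the main obstacle to be precisely this behaviour at the center: away from it everything is formal covering theory, but the continuity of the extension and the clean factorization $\tilde F=\zeta_j^k$ must be argued by hand, since at the branch point $f$ and $F$ are not local homeomorphisms and the normal form $z\mapsto z^{1+\beta}$ has to be invoked.
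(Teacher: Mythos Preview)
Your argument is correct and follows essentially the same route as the paper: the lifting criterion on the punctured disks (which the paper records in the paragraph immediately preceding the lemma) gives the equivalence $1+\beta_F=k(1+\beta_f)$, and the $k$-root $\zeta_j$ of $\tilde F$ is then used to read off the differentiability. Your treatment is in fact more explicit than the paper's, which simply normalizes $F(z)=z^{1+\beta_F}$ and $f(w)=w^{1+\beta_f}Q(w)$ and asserts the smoothness of $\tilde F$ from the relation $\zeta_j\circ\tau=zQ_1(z)$; your separate handling of the continuous extension at the center and of the $C^l$ regularity via local inverses of $f$ fills in details the paper leaves implicit.
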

\begin{proof}
 We can assume that 
 \[
F(z)=z^{1+\beta_F},\   \quad z\in D_3,  
\]
and $f(w)=w^{1+\beta_f}Q(w),$ where $Q(w)\neq0,\ w\in D_1.$ The existence of 
$\tilde F$ is equivalent to $1+\beta_F=k(1+\beta_f)$. Then the degree of 
$\tilde F$ is $k$ implying on the existence of a $k$-root $\zeta_j$. This shows 
that $\tilde F$ is smooth once 
$\zeta_j^{1+\beta_F}=\big(zQ_1(z)\big)^{1+\beta_F}$ and 
$\zeta_j\circ\tau=zQ_1(z)$ where $\tau$ is one deck transform. Then $\tilde F$ 
has the same differentiability of $f.$
\end{proof}

The next result is similar to theorem \ref{th3.1} but is a weak version of the 
little Picard's theorem in the case of rational functions. It means that we can 
change 
the Gauss map with any other branched covering $F\colon M\to\sph^2$ and the 
same conclusion about the number of missing points holds. It is consequence 
that {\bf (RH)} and {\bf (TC)} formulas are true for $F.$
\begin{definition}
Let $F\colon M\to\sph^2$ be a $C^2$ branching covering that has $C^0$ extension 
to $F\colon\M\to\sph^2$. Let $Z\subset \M$ be the subset where this extension 
is made. If $Z\neq\emptyset$ we define the order of the branching 
$\beta_F(z),\ z\in Z,$ by
\[
 \beta_F(z)=\lim_{\epsilon\to0}\frac{1}{2\pi}\int_{\partial D_\epsilon}k_g-1
\]
where $D_\epsilon\subset\M$ are small neighborhoods of the end $z\in Z\subset 
E_m$ endowed with the metric of $\sph^2$ by $F$ and the geodesic curvature with 
respect to this metric. 
\end{definition}

\begin{theorem}\label{th3.2}
 Let $M=\M\setminus E_m$ be a finite geometric type surface. 
Let $F\colon M\to\sph^2$ be an at least $C^2$ branched covering map having 
$C^0$ extension 
to a branched covering map denoted by $F\colon\M\to\sph^2.$ 
Then the ({\bf T-C}) and ({\bf R-H}) formulas holds
\begin{eqnarray}
 2\deg F&=&-\chi(\M)+\sharp E_m+I(E_m),\label{eq3.13}\\
 2\deg F&=&\chi(\M)+\beta_F(\M). \label{eq3.13.0}
\end{eqnarray}
In particular  $$\sharp Y\leq 3$$ for  
$Y=\sph^2\setminus F(M)$. 
\end{theorem}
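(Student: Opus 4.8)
The plan is to run the bookkeeping of Theorem \ref{th3.1} essentially verbatim, with the Gauss map $G$ replaced by the given $F$: once the two relations (\ref{eq3.13}) and (\ref{eq3.13.0}) are available for $F$, the bound on $\ell=\sharp Y$ is purely formal. First I would fix the end data relative to $F$ exactly as in (\ref{notations}), setting $E^\infty=F^{-1}(Y)\subset E_m$, $E_0=E_m\setminus E^\infty$, and writing $M_{br}$ for the finite set of interior branch points of $F$. Since $F$ omits precisely $Y$, every $y\in Y$ has all of its $F$-preimages among the ends, and the fibre of the branched covering $F\colon\M\to\sph^2$ over $y$ has total multiplicity $\deg F$; summing $\sum_{x\in F^{-1}(y)}(1+\beta_F(x))=\deg F$ over the $\ell$ points of $Y$ produces the counting identity $\ell\,\deg F=\sharp E^\infty+\beta_F(E^\infty)$, the exact analogue of (\ref{eq3,1}).

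For the (R-H) relation (\ref{eq3.13.0}) I would pull back the round metric of $\sph^2$ by $F$ to a metric $g_F$ on $M$ that is smooth of constant curvature $1$ off the branch locus and has a cone point of angle $2\pi(1+\beta_F(p))$ at each branch point (the interior points of $M_{br}$ and, by the definition of $\beta_F$ given above, the ends at which $F$ branches). The cone form of Gauss--Bonnet on the closed surface $\M$ then reads $4\pi\deg F-2\pi\beta_F(\M)=2\pi\chi(\M)$, since a branched covering sweeps out the sphere's area $\deg F$ times and each cone contributes $-2\pi\beta_F(p)$; dividing by $2\pi$ gives $2\deg F=\chi(\M)+\beta_F(\M)$. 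Equivalently this is just the classical Riemann--Hurwitz formula for $F\colon\M\to\sph^2$, so either route is routine.

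The (T-C) relation (\ref{eq3.13}) is the genuinely delicate step, and it is where the finite geometric type hypothesis must be spent. Here the intended route is the finite-total-curvature Gauss--Bonnet: excise a small $g_F$-disk $D_\epsilon(w)$ around each end, keep the interior branch points as cones, and let $\epsilon\to0$, so that the punctured surface has Euler characteristic $\chi(\M)-\sharp E_m$ and the boundary loop around each end reads off a geometric index $I(w)$ through the winding $\tfrac{1}{2\pi}\int_{\partial D_\epsilon}k_g\to I(w)$, assembling into $2\deg F=-\chi(\M)+\sharp E_m+I(E_m)$. I expect this to be the main obstacle, because the geometric index is native to the \emph{immersion} (the end is a graph over the complement of a disk, covered $I(w)$ times, as recalled in \S\ref{ftp}), whereas $F$ is an arbitrary $C^2$ branched covering with no holomorphicity and no a priori compatibility with that end structure; the crux is to show that the end contributions for such an $F$ really do organise into the indices $I(w)\ge1$, i.e.\ that the total-curvature identity transfers from $G$ to $F$. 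I would attack this through the local model of $F$ near each end together with the geodesic-curvature definition of $\beta_F$, which was set up precisely as the renormalised total curvature of $\partial D_\epsilon$.

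Granting both relations, the conclusion is immediate. Adding (\ref{eq3.13}) and (\ref{eq3.13.0}) gives $4\deg F=\sharp E_m+\beta_F(\M)+I(E_m)$; substituting the counting identity and the splittings $\sharp E_m=\sharp E^\infty+\sharp E_0$ and $\beta_F(\M)=\beta_F(E^\infty)+\beta_F(E_0\cup M_{br})$ yields, exactly as in (\ref{eq3.2.3}),
\begin{equation}
(4-\ell)\,\deg F=\sharp E_0+\beta_F(E_0\cup M_{br})+I(E_m)>0 .
\end{equation}
Since $F$ is non-constant we have $\deg F>0$, and each geometric index satisfies $I(w)\ge1$, so $I(E_m)\ge\sharp E_m\ge1$ makes the right-hand side strictly positive; hence $4-\ell>0$, that is $\sharp Y=\ell\le3$.
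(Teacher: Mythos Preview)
Your handling of the {\bf R--H} identity (\ref{eq3.13.0}) and of the final counting that produces $(4-\ell)\deg F>0$ is fine and matches the paper line for line. The gap is in your proposed derivation of the {\bf T--C} identity (\ref{eq3.13}).

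You plan to extract $I(w)$ as the limiting geodesic-curvature contribution of $\partial D_\epsilon(w)$ \emph{in the pulled-back metric $g_F$}. That cannot succeed: in $g_F$ the boundary of a small $F$-preimage disk around an end $w$ is, up to $o(1)$, a round circle in $\sph^2$ traversed $(1+\beta_F(w))$ times, so by the very definition quoted just before the theorem one has $\tfrac{1}{2\pi}\int_{\partial D_\epsilon}k_g\to 1+\beta_F(w)$. This is precisely the information already packaged in {\bf R--H}, and there is no mechanism by which it could equal the geometric index $I(w)$, which is a datum of the immersion $\varphi\colon M\to\R^3$ and is completely insensitive to the choice of $F$. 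Running Gauss--Bonnet in $g_F$ with these boundary terms simply reproduces (\ref{eq3.13.0}) a second time; it does not yield (\ref{eq3.13}). You correctly flagged this step as the obstacle, but the proposed attack is aimed at the wrong metric.

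The paper's route to (\ref{eq3.13}) is different in exactly this respect: Gauss--Bonnet is applied on $M\setminus U_\epsilon$ \emph{in the metric induced by the immersion into $\R^3$}, where $U_\epsilon$ is a neighbourhood of $W=F^{-1}(F(E_m))\supset E_m$. In that metric the local model at an end is the $I(w)$-fold cover of the exterior of a disk in $G(w)^\perp$, and in isothermal coordinates the conformal factor has a pole of order $1+I(w)$; this is what forces $\tfrac{1}{2\pi}\int_{\partial U_w}k_g\to -I(w)$ (equation (\ref{eq3.9})), while at interior points of $W$ the metric is smooth and the limit is $+1$ (equation (\ref{eq3.10})). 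The map $F$ enters only through the choice of the exhaustion $M\setminus U_\epsilon$ and through the identification of the bulk curvature integral with $2\deg F$; the boundary analysis that produces $I(E_m)$ lives entirely in the immersed metric. Your sketch never invokes that metric, so the geometric indices have no way to appear.
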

\begin{proof}
 Set $Z=F( E_m),$ where $E_m$ is the set of ends of $M.$ Consider  
$W=F^{-1}(Z).$ Over $\M\setminus W$ take one appropriated $\epsilon>0$ such 
that:
\begin{enumerate}
 \item All the balls $B_\epsilon^{\sph^2}(z),\ z\in Z,$ are 
disjoints
\item The collection $U_\epsilon=F^{-1}(B_\epsilon^{\sph^2}(z)\setminus\{z\}),\ 
F(w)=z\in Z,\ w\in W,$ are disjoints topological annulus isolating the points 
of $W$ and $\text{diam}(U_w)\mapsto0$ if $\epsilon\mapsto0$ where $U_w$ are the 
connected component of $U_\epsilon.$
\end{enumerate}
Set $V_\epsilon=\{B_\epsilon^{\sph^2}(z)\ |\ z\in Z\}.$ Observe 
that $\M\setminus W\subset M$ and $F|(M\setminus W)$ is a regular covering map. 
If we endowed $\sph^2\setminus Z$ with the metric of $M$ we get
\begin{eqnarray}
 2\deg(F)-\text{area}(V_\epsilon)&=&-\frac{1}{2\pi}\int_{M\setminus 
U_\epsilon}KdM\\
 &=&-\big(\chi(\M)-\sharp W\Big)+\frac{1}{2\pi}\int_{\partial(M\setminus 
U_\epsilon)}k_g,\\
&=&-\chi(\M)+\sharp W-\frac{1}{2\pi}\int_{\partial U_\epsilon}k_g,
\end{eqnarray}
where $K$ is the Gaussian curvature of $M$ and $k_g$ is the geodesic curvature 
of $\partial U_\epsilon.$ Hence 
\begin{equation}\label{eq3.7}
 \frac{1}{2\pi}\int_{\partial U_\epsilon}k_g=\sum_{w\in 
W}\frac{1}{2\pi}\int_{\partial U_w}k_g.
\end{equation}
If in some disk $D_\epsilon=B_\epsilon^{\R^2}(0)$ we have a metric 
$\lambda|dz|=|z|^s\nu(z)|dz|,$ where $\nu(z)$ has no zeros, a straightforward 
calculation gives 
\begin{equation}\label{eq3.8}
 \lim_{\epsilon\to0}\int_{\partial D_\epsilon}k_g=2\pi(1+s). 
\end{equation}
For isothermal parameter $\psi\colon D_\epsilon\to M\subset\M$ with 
$\psi(0)=w\in W$ and $w\in E_m$ we get $s=-1-I(w)$ where $I(w)$ is the 
geometric index of the end. Hence 
\begin{equation}\label{eq3.9}
 \lim_\epsilon\int_{\gamma_\epsilon}k_g=-2\pi I(w),
\end{equation}
If $w\in W\cap M$ then the 
metric has $s=0$ and   
\begin{equation}\label{eq3.10}
 \lim_{\epsilon\to0}\frac{1}{2\pi}\int_{\partial 
U_w}kg=1.
\end{equation}
Making $\epsilon\mapsto0$ we see that the {\bf TC} formula holds for $F$, or
\begin{eqnarray}
 2\deg F&=&-\chi(\M)+\sharp W-\sharp(M\cap W)+I(E_m),\\
&=&-\chi(\M)+\sharp E_m+ I(E_m). 
\end{eqnarray}
As we did before, if $Y=\sph^2\setminus F(M)$ then $F^{-1}(Y)\subset E_{m}$ 
once that $F$ extends to the branching  and ends. Then 
\begin{equation}\label{eq3.11}
 \sharp Y\deg(F)=\sharp E_m^\infty+\beta_F(E_m^\infty),\quad E_m^\infty=E_m\cap 
F^{-1}(Y).
\end{equation}
In similar way the map $F\colon M\setminus W'\to\sph^2\setminus F(E_m\cup B),\ 
B$ the set of branching, is an immersion and we can do the pull back of the 
metric of $\sph^2$ by $F.$
Following the proof of (\ref{eq3.13}) we get 
\begin{eqnarray}
  2\deg( F)-\text{area}(V_\epsilon)&=&\frac{1}{2\pi}\int_{M\setminus 
U_\epsilon}KdM\\
 &=&\chi(\M)-\sharp W-\frac{1}{2\pi}\int_{\partial(M\setminus 
U_\epsilon)}k_g,\\
&=&\chi(\M)-\sharp W+\frac{1}{2\pi}\int_{\partial U_\epsilon}k_g,
\end{eqnarray}
where $K\equiv1$ is the Gaussian curvature and $k_g$ is the geodesic curvature 
in the metric $\lambda(F)|dF||dz|$ and $\lambda(w)|dw|$ is the metric 
of the sphere. The equations (\ref{eq3.7}) and (\ref{eq3.8}) holds and 
\begin{equation}
 s=
           \beta_F(x),\qquad x\in B.
\end{equation}
Hence
\begin{equation}\label{eq3.16.0}
 2\deg F=\chi(\M)+\beta_F(\M). 
\end{equation}
The equation $\sharp W\deg F=\sharp F^{-1}(W)+\beta_F(V)$ is true if $W\cap F(E_m)=\emptyset.$ Since this 
number is one integer and constant over $\sph^2\setminus F(Z)$ it is the same 
over all $W.$
Adding the {\bf TC} and {\bf RH} formulas for $F$ we get
\begin{equation}\label{eq3.12}
 (4-\sharp Y)\deg(F)=\sharp(E_{m}\setminus E_m^\infty)+\beta_F(\M\setminus 
E_m^\infty)+I(E_m)>0,
\end{equation}
implying that $ \sharp Y\leq3$ as we wish.
\end{proof}

\begin{remark}\label{remk2}
Let $M=\M\setminus E_m$ a finite geometric type surface and $G\colon 
M\to\sph^2$ a branched covering map with $C^0$ extension to 
$G\colon\M\to\sph^2$ and not empty set of missing points $\sph^2\setminus 
G(M).$
Assume there are a regular covering map $h\colon\sph^2\setminus 
X\to\sph^2\setminus Y$ such that 
\begin{equation}\label{eq3.13.1}
 h_\star(\pi_1(\sph^2\setminus X))=\pi_1(\sph^2\setminus Y),\quad \sph^2\setminus G(M)\subset Y.
\end{equation}
Let $F\colon M\to\sph^2\setminus X$ the lifting of $G$ by $h.$ 
\begin{question}
When the map $F$ can be extended continuously to $\M$?
\end{question}
For all point $x\in h^{-1}(y)$ where $\beta_h(x)=0$ the map $F$ can be extended to one point in the fiber $G^{-1}(y)$ smoothly. For $y\in Y$ and $x\in h^{-1}(y)$ with $\beta_h(x)>0$ we need the condition given  by lemma (\ref{lemma3.2}).
If it do not happen there is no $C^0$ extension. 
\end{remark}

\subsection{Proof of theorem 3.}
Let $M=\M\setminus E_m$ be a finite geometric type surface and $G\colon 
M\to\sph^2$ be a non constant branched covering with finite fiber that has a 
$C^0$ extension to a branched 
covering $ G\colon\M\to\sph^2.$ Set $Y=\big(\sph^2\setminus 
G(M)\big)$ and suppose that $\sharp Y=3.$ Hence it is possible to find 
$h\colon \sph^2\setminus X\to\sph^\setminus Y$ satisfying (\ref{eq3.13.1}) and 
proving the existence of the lifting $F\colon M\to\sph^2\setminus X$ of $G$ by 
$h$ and $F$ has continuous extension to $\M$. Since $X=\sph^2\setminus F(M)$ 
and $\sharp X=6$ we get a contradiction with theorem (\ref{th3.2}) 
proving the theorem 3.


\section{The proof of theorem 1.}
The space finite geometric type surfaces is closed for an operation of bending 
the ends.
Take $f\colon M\to\R^3$ in FGT class with Gauss map $G$ and set of ends 
$E_k=\{w_1,\cdots,w_k\}.$ By \cite{kn:J-M} 
we can choose $R_0>0$ such that $f$ satisfies 
\begin{enumerate}
\item[(i)] $f^{-1}(\R^3\setminus( B_{R}^{\R^2}(0)\times\R))=\cup_{j} V_{jiR},$ 
where
 $V_{jiR}$ is a neighborhood of each end $w_j\in E,\ G(w_j)=y_i\in Y,$
in
$\M,$ for all $R\geq R_0$ and all $i,$
\item[(ii)] $F_i=P\circ f_i\colon (V_{jiR}\setminus\{w_j\})\to \R^2\setminus
B_{R}^{\R^3}(0)$ is a covering with fibre's cardinality the geometric index 
$I(w_j),$ for all
$R\geq R_0$ and for all $w_j\in E$ and all $i.$
\end{enumerate}

Choose $\delta_0>0$ such that \[B_{\delta_0}^{\sph^2}(y_j)\subset
G(V_{jR_0}),\qquad y_j=G(w_j),\quad  w_{j} \in E,\]
and all balls disjoints.
There is $R>R_0$ such that
\begin{equation}
G(V_{jR})\subset B_{\delta_0}^{\sph^2}(y_j),\qquad w_j \in E.
\end{equation}
Fix some $y_j\in G(E)$ and take $Z=\cup V_{jR}'$ where $\{w_j'\}= V_{jR}'\cap
E$ satisfies $G(w_j')=y_j.$  Now we are in position to {\em bend} the set $Z$
to get a new $C^3$ immersion $\tilde{f}$ that just move $y_j$ to a point $y$
close to $y_j.$
Choose some $\delta_1>0$ such that
\[ \sin(\delta_1)<\text{dist}_{\sph^2}\big(y_j,f(\partial Z)\big)/4\]
and $y\in B_{\delta_1}^{\sph^2}(y_j).$
Take a $C^\infty$ function $\psi\colon\R^3\to\R,\ 0\leq\psi\leq 1,$ with
$\psi\equiv 0$ for $|x|\leq 2R$ and $\psi\equiv 1$ for $|x|\geq 3R.$  Define
$h\colon\R^3\to\R^3$ by
$h(x)=(1-\psi(x))x+\psi(x)A_\delta x$ where $\delta$ is the angle between $y_j$
and $y$ and $A_\delta$ is the rotation of angle $\delta>0$ moving $y_j$ to $y.$
Let $\tilde{f}\colon M\to\R^3$ be defined by $\tilde{f}(x)=f(x)$ if $x\in
M\setminus Z$ and $\tilde{f}(x)=h(f(x))$ for $x\in Z.$
Since $f$ is proper the set $K=f^{-1}\big(B_R^{\R^2}(0)\big)$ is compact in
$M.$ If we take $\delta_1$ small enough with no points with zero Gaussian 
curvature in the gluing area. Hence we have the following result.
\begin{proposition}\label{prop-spacefuntion} Let $f$ a finite geometric type 
surface with projection $F=P\circ f$ and let
$\tilde{f}$ be the bending constructed above. Let $G$ and $\tilde{G}$ be the
respective Gauss maps. Then
\begin{enumerate}
\item $G(x)=\tilde{G}(x)$ for all $x\notin Z$ and $\tilde{G}(Z)\subset
B_{\delta_1}^{\sph^2}(y_j),$
\item $\tilde{G}(w_j')=y$ for all $w_j'\in G^{-1}(y_j)$ for any chose of $y\in
B_{\delta_1}^{\sph^2}(y_j),$
\item if $G$ misses $y_j$ then $\tilde{G}$ misses $y,$
\item $G(w)=\tilde{G}(w)$ for all $w\in E\setminus G^{-1}(y_j),$
\item the number of missing points of $G$ and $\tilde{G}$ are the same,
\item the geometric index of $f$ and $\tilde{f}$ are the same at each end,
\item the order of branching points of $G$ and $\tilde G$ are the same.
\end{enumerate}
\end{proposition}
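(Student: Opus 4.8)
The plan is to reduce all seven items to a single uniform estimate on the bending map $h$ and then read each item off from the three regions cut out by $\psi$. First I would write $h=\mathrm{id}+\psi\,(A_\delta-\mathrm{id})$ and differentiate: for $y=f(x)$,
\[
dh_y(v)=v+\psi(y)(A_\delta-\mathrm{id})v+d\psi_y(v)\,(A_\delta y-y).
\]
On the transition shell $\{2R\le|y|\le 3R\}$ one has $\|A_\delta-\mathrm{id}\|=O(\delta)$, $|A_\delta y-y|\le\delta|y|\le 3R\delta$ and $\|d\psi\|\le c/R$, so $dh=\mathrm{id}+O(\delta)$ uniformly there, while outside the shell $dh$ equals $\mathrm{id}$ (for $|y|\le 2R$) or $A_\delta$ (for $|y|\ge 3R$). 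Hence $\tilde f=h\circ f$ satisfies $\tilde f\to f$ in $C^1$ on the compact collar $C:=Z\cap\{|f|\le 3R\}$ as $\delta\to 0$. Since $f$ is of finite geometric type its flat points are finite, so I would fix $R$ so large that $C$ contains no flat point of $f$; then by $C^1$-closeness, for $\delta$ small $\tilde f$ is an immersion with nowhere-vanishing curvature on $C$ and $\tilde G\to G$ uniformly on $C$.

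Items (1), (2), (4) are then immediate from the three regions. On $M\setminus Z$ we have $\tilde f=f$ identically, giving $\tilde G=G$ there and, by continuity of the extensions to $\M$, $\tilde G(w)=G(w)$ for every end $w\notin G^{-1}(y_j)$, since those ends lie off $Z$ (item 4). On the outer end region $Z\cap\{|f|\ge 3R\}$, $\psi\equiv 1$ makes $\tilde f=A_\delta f$ a rigid rotation, so $\tilde G=A_\delta\circ G$; at an end $w_j'\in G^{-1}(y_j)$ this yields $\tilde G(w_j')=A_\delta y_j=y$ (item 2). Combining the three regions with $G(Z)\subset B^{\sph^2}_{\delta_0}(y_j)$ and the $C^1$-closeness on $C$ shows $\tilde G(Z)$ stays in a geodesic ball about $y_j$ of radius at most $\delta_0+\delta$, which is the ball $B^{\sph^2}_{\delta_1}(y_j)$ of item (1) after taking $R$ large enough that $\delta_0+\delta<\delta_1$.

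Item (3) is the delicate point and the place where the smallness of $\delta_1$ is really spent; I expect this to be the main obstacle. Writing $M=(M\setminus Z)\cup C\cup(Z\cap\{|f|\ge 3R\})$, on the outer region $\tilde G=A_\delta G$ omits $A_\delta y_j=y$ because $G$ omits $y_j$. On the two compact pieces $\M\setminus Z$ and $C$ the map $G$ never equals $y_j$ (the whole fibre $G^{-1}(y_j)$ consists of ends, which lie in the interior of $Z$, so $G^{-1}(y_j)\cap(\M\setminus Z)=\emptyset$ and $G^{-1}(y_j)\cap C=\emptyset$), whence by compactness $\mathrm{dist}^{\sph^2}(G(\M\setminus Z),y_j)\ge\rho$ and $\mathrm{dist}^{\sph^2}(G(C),y_j)\ge\rho'$ for some $\rho,\rho'>0$. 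Fixing $\delta$ small enough that the $C^1$-perturbation keeps $\tilde G(C)$ at distance $\ge\rho'/2$ from $y_j$, and then choosing $\delta_1<\tfrac12\min(\rho,\rho')$ so that $\mathrm{dist}^{\sph^2}(y,y_j)\le\delta_1$, forces $\tilde G\ne y$ on both compact pieces. Hence $y\notin\tilde G(M)$. The thing to watch is that the perturbation bound on $C$ and the bound $\mathrm{dist}^{\sph^2}(y,y_j)\le\delta_1$ are tied to the single parameter $\delta\le\delta_1$, which the estimate of the first paragraph supplies.

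Finally, items (6), (7) and (5) follow from invariance of the branched-covering combinatorics. At ends and interior branch points off $Z$ nothing changes, since $\tilde f=f$; on the outer end region $\tilde f=A_\delta f$ is an isometry of $\R^3$, which carries the projection $P\circ f$ and the Gauss map to congruent ones of the same covering order, so both the geometric index $I(w_j')$ (item 6) and the branching number of $\tilde G$ (item 7) are unchanged; and on the collar $\tilde f$ has no flat point for small $\delta$, so $\tilde G$ creates no new branching there. Consequently $\tilde G$ extends to a branched covering $\M\to\sph^2$ of the same degree and with the same end and branch data as $G$, only with the target value $y_j$ displaced to $y$. Each remaining omitted point $y_i$ ($i\ne j$) stays omitted once $\delta_1<\min_{i\ne j}\mathrm{dist}^{\sph^2}(y_j,y_i)$, because then $\tilde G(Z)\subset B^{\sph^2}_{\delta_1}(y_j)$ avoids $y_i$ while $\tilde G=G$ off $Z$; $y$ is omitted by item (3); and surjectivity of the branched cover together with the preserved fibre data shows no further point is omitted. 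Hence the omitted set of $\tilde G$ is $(Y\setminus\{y_j\})\cup\{y\}$, of the same cardinality (item 5).
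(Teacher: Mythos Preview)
Your argument is correct and is essentially the verification the paper leaves implicit: in the paper the proposition is stated immediately after the construction of $h=(1-\psi)\,\mathrm{id}+\psi A_\delta$ with the single remark ``if we take $\delta_1$ small enough with no points with zero Gaussian curvature in the gluing area'', and no further proof is given. Your three-region decomposition (identity where $\psi\equiv 0$, rigid rotation where $\psi\equiv 1$, and a $C^1$-small perturbation on the transition shell controlled by $\|A_\delta-\mathrm{id}\|=O(\delta)$) is exactly the intended mechanism, and your compactness argument for item~(3) is the right way to make the ``$\delta_1$ small enough'' clause precise.

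Two minor points worth cleaning up. First, in item~(1) your inequality ``$\delta_0+\delta<\delta_1$ after taking $R$ large enough'' mixes the order of choices: $\delta_0$ is fixed before $R$ in the paper's setup, and what actually shrinks as $R\to\infty$ is the radius of the smallest geodesic ball containing $G(V_{jR})$; phrase it that way. Second, in item~(5) the clause ``surjectivity of the branched cover together with the preserved fibre data shows no further point is omitted'' deserves one more line: since $\tilde G(E_m)\subset\big(G(E_m)\setminus\{y_j\}\big)\cup\{y\}$ by items~(2) and~(4), any $p$ outside this finite set has $\tilde G^{-1}(p)\cap E_m=\emptyset$ and hence is not omitted; and for $p\in G(E_m)\setminus Y$ one uses that the preserved branch orders force $\sharp\tilde G^{-1}(p)=\sharp G^{-1}(p)>\sharp\big(G^{-1}(p)\cap E_m\big)$, so some preimage lies in $M$. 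With these tweaks the proof is complete and matches the paper's intent.
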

The above results proves Theorem 1.

\section{Gauss map not branched: proof of theorem 2, items 2 and 
3.}\label{cov-map}
Let $M=\M\setminus E_m$ a finite geometric type surface with $\M$ of genus 
$\mu.$ We can represent $\M$ as one regular rectangle of $\mathbf{R}_\mu$ of 
$4\mu$ sides  with boundary ordered by the relation $a_ib_ia_i^{-1}b_i^{-1},\ 
1\leq i\leq\mu.$ The vertexes are identified and the sides $a_i$ and $b_i$ 
defining  curves $\alpha_i$ and $\gamma_i$, witch are generators of the 
fundamental 
group $\pi_1(\M).$ When we take $M=\M\setminus E_m$ we add more $m-1$ 
generators 
to the first fundamental group. From now on we assume that the Gauss map 
\[G\colon M\to\sph^2\setminus Y,\qquad Y=\sph^2\setminus G(M)\]
 is a regular covering map, that is, there are no branching. In particular we 
have $E_m^\infty=E_m,\ E_m^0=B_0=\emptyset,$ and $B^\infty\subset E_m.$ The 
{\bf R-H} formula gives
\begin{equation}\label{eq2.1}
n(2-\sharp Y)=\chi(M)=\chi(\M)-\sharp E_m,
\end{equation}
or
\begin{equation}\label{eq2.2}
\chi(\M)=\sharp E_m+n(2-\sharp Y).
\end{equation}
If $\sharp Y=1$ then $\sharp E_m+n=2$ and $M=\sph^2\setminus\{\infty\}$ 
proving item 3 of theorem 2. If $\sharp Y=2$ we get $\sharp E_m=2$ proving item 
2 of theorem 2.

\section{New proof of a Lopes-Ros theorem.}
In \cite{kn:L-R} the authors proved the following result.
\begin{theorem}[Lopes-Ros, \cite{kn:L-R}]
 Let $M$ be a non flat complete minimal surface with finite total curvature and 
embedded. If $M=\sph^2\setminus E_m$  then $M$ is a catenoid.
\end{theorem}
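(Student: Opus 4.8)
The plan is to run the topological bookkeeping of the ({\bf RH}) and ({\bf TC}) formulas exactly as in Theorem \ref{th3.1}, and to supply the two pieces of geometric information that embeddedness forces but that the formulas alone cannot see. First I would record what embeddedness gives at each end. A complete embedded minimal end of finite total curvature is a graph over the exterior of a disk in its limit tangent plane, so its geometric index is $I(w_i)=1$ and the Gauss map is injective near the puncture, whence $\beta(w_i)=0$. Moreover, two ends whose limit tangent planes are not parallel would be graphs over intersecting planes and so would eventually cross; embeddedness therefore forces all limit normals to coincide up to sign, say $\pm N$. After a rotation I normalize $N,\ -N$ to the poles $0,\infty\in\sph^2=\CC\cup\{\infty\}$, so that $G(E_m)\subseteq\{0,\infty\}$ and, identifying $G$ with the meromorphic function $g$ of the Weierstrass data, every puncture is a simple zero or simple pole of $g$.

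Next I would feed this into the formulas with $\M=\sph^2$, so $\chi(\M)=2$. Since $M$ is non flat, $g$ is non constant and $n=\dg(G)\geq1$. With $I(w_i)=1$ at each of the $m$ ends, ({\bf TC}) gives $2n=-2+m+m$, that is $n=m-1$; then ({\bf RH}) gives $\beta(\M)=2n-2=2(m-2)$. Because $\beta(w_i)=0$ at every end, all of this branching $2(m-2)$ sits at interior branch points of $g$, i.e. at interior flat points of $M$. Writing $p$ for the number of ends over $0$ and $q$ for the number over $\infty$, we have $p+q=m$, and $g$ has degree $n=m-1$.

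The whole statement then reduces to the single claim that $g^{-1}(\{0,\infty\})=E_m$, i.e. the vertical normals $\pm N$ are attained \emph{only} at the ends. Indeed, if there are no interior preimages of $0$ or $\infty$, then, the ends being simple, counting zeros and poles gives $p=n$ and $q=n$; hence $m=p+q=2n=2(m-1)$, forcing $m=2$ and $n=1$. In that case $\beta(\M)=0$, so $G$ is an unbranched degree-one conformal diffeomorphism of $\sph^2$ with its only zero and pole at the two vertical ends; integrating the Weierstrass representation with $g(z)=z$ (up to a M\"obius change) yields the catenoid. Alternatively one may simply invoke that a complete embedded minimal surface with exactly two ends is the catenoid.

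The hard part will be precisely this reduction, namely excluding interior preimages of $\{0,\infty\}$ and, equivalently, the interior branch points counted by $\beta(\M)=2(m-2)$ when $m>2$. The formulas ({\bf RH}) and ({\bf TC}) are blind to embeddedness and happily allow genus-zero configurations with several parallel ends compensated by interior flat points, so the argument must genuinely use the global embedding — via the geometry of the vertical ends and a maximum-principle comparison of $M$ with the horizontal planes $\{x_3=c\}$ (or a flux/height analysis of the function $x_3$) to show that no interior point has a horizontal tangent plane and that more than two balanced catenoidal ends cannot be assembled into an embedded genus-zero surface. This is exactly where the geometric content of the L\'opez--Ros theorem resides, and it is the step that the topological formulas cannot settle by themselves.
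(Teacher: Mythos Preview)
Your proposal follows the classical L\'opez--Ros strategy and is \emph{not} the paper's argument. The paper's whole point in placing this theorem here is to advertise that its own Theorem~3 (Assertion~\ref{ftc}, the generalized little Picard theorem) short-circuits the hard geometric step you isolate. After identifying $M$ conformally with $\CC\setminus E$ (one end sent to $\infty$), the paper takes the conformal coordinate $f\colon M\to\CC\subset\sph^2$ --- written somewhat cryptically as ``$x^{-1}$'' --- which extends to the identity on $\M=\sph^2$, and applies Theorem~3 to this $f$ to conclude $\sharp(\sph^2\setminus f(M))\le 2$, i.e.\ $\sharp E_m\le 2$. Non-flatness forces $m=2$, and ``classical results'' (embedded, genus zero, two ends $\Rightarrow$ catenoid) finish. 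No analysis of interior flat points, no maximum-principle sweep, no flux.

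Your ({\bf RH})/({\bf TC}) bookkeeping for the Gauss map is correct, and your reduction to ``$g^{-1}(\{0,\infty\})=E_m$'' is indeed the crux of the original approach --- but you explicitly leave that crux unproved, so as a standalone proof the proposal has a declared gap. The contrast is sharp: you apply ({\bf RH}) and ({\bf TC}) to $G$ and then need a genuine geometric argument to rule out interior vertical normals; the paper instead applies its Picard-type theorem to a \emph{different} branched covering of $\sph^2$, of degree one, so that the bound on omitted values becomes directly a bound on $\sharp E_m$ and no interior analysis is required. What your route buys is independence from the paper's Theorem~3; what the paper's route buys is a two-line argument that leans entirely on that theorem.
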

The original proof of this theorem use deformation of small pieces of $M$ under 
some special flow to get the conclusion. Theorem 3 implies this result easily. 
In fact, consider $M$ in some position that $\infty$ is the image of some end. 
Then we can consider $M=\CC\setminus E$ and $\sharp E<\infty.$ If $x\colon 
M\to\R^3$ is such that minimal embedded the map then $f=x^{-1}\colon M\to\CC$ 
is well defined and is a branched covering. By theorem 3 we get 
$\sharp\CC\setminus f(M)\leq 1,$ or $f$ is constant. Hence $\sharp E_m=2$ and 
classical results grantee that $M$ is  a catenoid.

\section{Remarks on Gauss map missing two points.}\label{sec6}
Let $M=\M\setminus E_m$ be a finite geometric type minimal surface whose Gauss 
map $G$ misses two points $\sph^2\setminus G(M)=\{a_1,\ a_2\}.$ 
The existence of those examples are guarantee 
by \cite{kn:M-S} for genus $1$ and \cite{kn:G-J} for arbitrary genus but all 
are not embedded. Since $2\deg(G)=\sharp E_m^\infty+\beta(E_m^\infty)$ the {\bf 
R-H} formula gives 
\begin{equation}
 \chi(\M)=\sharp E_m^\infty-\beta(M\cup E_m^0),
\end{equation}
where $E_m^\infty=G^{-1}(\{a_1,a_2\})$ and $E_m^0=E_m\setminus E_m^\infty.$
Take $w\in\sph^2\setminus \{a_1,\ a_2\}$, $Y=\{a_1,\ a_2,\ w,\}$ and 
$f\colon\sph^2\setminus X\to\sph^2\setminus Y$ the map of lemma 
(\ref{lemma3.1}) with $X=f^{-1}(Y).$ Then $f|\sph^2\setminus X$ is a regular 
covering and there is the lifting $F\colon M\setminus 
G^{-1}(Y)\to\sph^2\setminus X$. By Remark \ref{remk2} we have $C^0$ extension of
$F$ to $\M$ if and only if $3$ divides all 
$1+\beta_G(z)\geq2,\ z\in E_m^\infty\cup G^{-1}(w).$
 But $\sharp \sph^2\setminus F(M)=4$ what is impossible by theorem \ref{th3.2}.  
Then for those immersions we can not have 
continuous extensions $ F\colon\M\to\sph^2$ for any $Y\supset \{a_1,\ 
a_2\},\ \sharp Y\geq3,$ no matter $G$ has.



\begin{thebibliography}{klmnp}


\bibitem{kn:B-F-M}J. L. Barbosa, R. Fukuoka and F. Mercuri: {\em Immersions of 
finite geometric type in Euclidean spaces.} Annals of Global Analysis and
Geometry vol. 22, 301-315 (2002)

\bibitem{kn:G-J} E. S. Gama and L. P. Jorge: {\em Complete minimal immersions 
of finite total curvature with Gauss map missing two points.}, preprint.



\bibitem{kn:J-M} L. P. Jorge and W. H. Meeks III: {\em The topology of
complete minimal surfaces of finite total Gaussian curvature.} Topology,
Vol. 22 N\underline o 2, 203-221 (1983).


\bibitem{kn:J-Mercuri} L.P. Jorge and F. Mercuri: {\em The Gauss map of a 
complete
non flat minimal surface in $\mathbb{R}^{3}$ with finite total curvature,} 
preprint (2018).

\bibitem{kn:L-R} Lopes and Ros F. J: {\em On the Gauss map of complete minimal 
surfaces,} J. Differential Geom. 33, 
No. 1, pages 293–300 (1991).

\bibitem{kn:M-P} W. H. Meeks and J. P\'erez: {\em A survey on classical minimal 
surfaces theory.} 


\bibitem{kn:M-S} Myaoka and Sato: {\em On complete minimal surfaces whose Gauss 
map
misses two directions} Arch. Math., Vol. 63, 565-576 (1994). 


\bibitem{kn:O} R. Osserman: {\em Global properties of minimal surfaces in $
{\bf E^3}$ and ${\bf E^n}.$} Annals of Mathematics, Vol. 80,
340-364 (1964).

\bibitem{kn:Rod} L. L. Rodriguez: {\em  A Note On Minimal Surfaces With Finite 
Total Curvature.}. Anais da Academia Brasileira de Ciências, v. 53, n.3, p. 
423-426, (1981). 



\end{thebibliography}
\end{document}